\newcommand{\lf}{\lfloor}
\newcommand{\llf}{\left\lfloor}
\newcommand{\rf}{\rfloor}
\newcommand{\rrf}{\right\rfloor}
\newcommand{\nf}{\llf\frac{n}{2}\rrf}
\newcommand{\mb}{\mathbb}
\newcommand{\Z}{\mb{Z}}
\newtheorem{theorem}{Theorem}
\newtheorem{lemma}[theorem]{Lemma}
\newtheorem{proposition}[theorem]{Proposition}
\newtheorem{conjecture}[theorem]{Conjecture}
\newtheorem{corollary}[theorem]{Corollary}
\newtheorem{remark}[theorem]{Remark}
\begin{document}
\title{Weighted Pebbling Numbers of Graphs}
\author{Stephanie Jones}
\address{Department of Mathematics\\
         Willamette University\\
        900 State St. \\
         Salem\\
         OR  97301
}
\email{stephanie3.1415@gmail.com}

\author{Joshua D. Laison}
\address{Department of Mathematics\\
         Willamette University\\
         900 State St. \\
         Salem\\
         OR  97301
}
\email{jlaison@willamette.edu}

\author{Cameron McLeman}
\address{Department of Mathematics\\
         The University of Michigan-Flint\\
        303 E. Kearsley Street\\
         Flint\\
         MI 48502
}
\email{mclemanc@umflint.edu}

\author{Kathryn Nyman}
\address{Department of Mathematics\\
         Willamette University\\
         900 State St. \\
         Salem\\
         OR  97301
}
\email{knyman@willamette.edu}

\thanks{Funded in part by a Willamette University College Colloquium Student Research Grant}

\keywords{pebbling number, weighted pebbling number}
\subjclass[2000]{Primary  05C57, 05C22; Secondary: 05C05, 05C38}
\date{\today}

\begin{abstract}
We expand the theory of pebbling to graphs with weighted edges.  In a weighted pebbling game, one player distributes a set amount of weight on the edges of a graph and his opponent chooses a target vertex and places a configuration of pebbles on the vertices.  Player one wins if, through a series of pebbling moves, he can move at least one pebble to the target. A pebbling move of $p$ pebbles across an edge with weight $w$ leaves $\lfloor pw \rfloor$ pebbles on the next vertex.  We find the weighted pebbling numbers of stars, graphs with at least $2|V|-1$ edges, and trees with given targets.
We give an explicit formula for the minimum total weight required on the edges of a length-2 path, solvable with $p$ pebbles and exhibit a graph which requires an edge with weight 1/3 in order to achieve its weighted pebbling number.   
\end{abstract}

\maketitle

\section{Introduction}
Graph pebbling originated two decades ago as a technique to prove a result on zero-sum sequences in finite groups \cite{chung89}.  Since then the subject has come into its own as an active area of research in graph theory, with over 50 papers by 80 authors \cite{hurlbert99, hurlbert05}.  
Problems in the field, guiding current research, include Graham's Conjecture on pebbling numbers of graph products \cite{feng01, herscovici03, herscovici98, wang09}, and pebbling numbers of graphs with small diameter \cite{bekmetjev09, bukh06, clarke97}.

One interpretation of the pebbling number arises from the \textit{pebbling game} we play against an evil opponent.  In this game, the evil opponent distributes some number of pebbles on the vertices of a graph $G$ and chooses a target vertex. Then we make pebbling moves on $G$. A pebbling move across an edge decreases the number of pebbles by a factor of 1/2, rounded down (see Figure \ref{pebbling moves}).  If we can reach the target with one or more pebbles, we win the game.  The \textit{pebbling number} of $G$ is the smallest number of pebbles that we can give the evil opponent and always win.

We can imagine a pebbling move across an edge $uv$ as multiplying the number of pebbles being moved from $u$ to $v$ by $1/2$.  In this way, $1/2$ can be thought of as the weight or cost on edge $uv$.  We generalize this idea to weighted graphs so that a pebbling move from $u$ to $v$ multiplies the number of pebbles being moved by the weight on edge $uv$, rounded down.   
As before, we can reinterpret these definitions game-theoretically.  Namely, we introduce the weighted pebbling game by adding a rule to the pebbling game: at the beginning of the game, we start with weights of $1/2$ on all the edges of $G$, and we are allowed to redistribute the weights on the edges of $G$ before handing the evil opponent the pebbles.  
The {\it weighted pebbling number} of $G$ is the minimum pebbling number over all possible distributions of this weight on $G$.

We begin Section 2 by defining ordinary pebbling numbers and stating a result by Chung. In Section 3, we define the weighted pebbling number and establish results for graphs with large numbers of edges.  We also give a formula for the pebbling number of weighted trees with determined targets and for the weighted pebbling number of stars.
Our main result appears in Section 4 in which we explore the minimum total weight required on a path to successfully pebble across it.
We prove the formula for this {\it pebbling weight function} for the path of length 2.
In Section 5, we find a graph that achieves its weighted pebbling number only when an edge with weight 1/3 is used.  We end with further questions and a lower bound on the pebbling weight function for a path of length $n$.

\section{Pebbling Numbers}

Following Hurlbert \cite{hurlbert05}, a \textit{\textbf{configuration}} $C$ of pebbles on a finite connected graph $G = (V,E)$ is a function $C : V \to \mathbb{N}$. The value $C(v)$ can be thought of as a number of pebbles placed at vertex $v$, and the \textbf{size} of the configuration is the sum $\sum_{v \in V} C(v)$ of pebbles on $G$. A \textit{\textbf{pebbling step}} along an edge from $u$ to $v$ takes $2$ pebbles from $u$ and adds $1$ pebble to $v$. We say that a vertex $t$ can be \textit{\textbf{reached}} by $C$ if one can repeatedly apply pebbling steps so that, in the resulting configuration $C'$, we have $C'(t)\geq 1$ (and $C'(v) \geq 0$ for all $v$). The \textit{\textbf{pebbling number}} $p(G)$ is the smallest integer $m$ so that any specified target vertex $t$ of $G$ can be reached by every configuration $C$ of size $m$. A configuration that can reach every vertex is called \textit{\textbf{solvable}}, and \textit{\textbf{unsolvable}} otherwise.  Figure \ref{pebbling moves} shows an example of a solvable pebbling configuration and its solution to an example (circled) target.

\begin{figure} [!htbp]
\begin{center}
  \includegraphics[width=7cm]{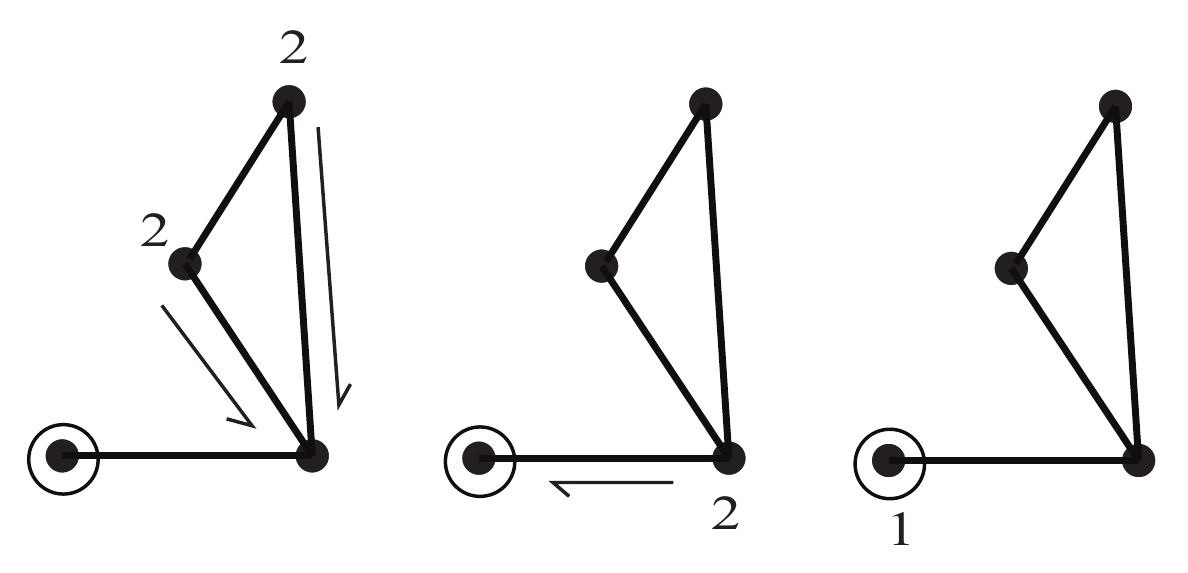}
  \end{center}
\caption{Pebbling to a target.}
 \label{pebbling moves}
\end{figure}

The pebbling number of a path with $n$ edges is $2^n$ \cite{hurlbert99}, and the pebbling number of a tree is given in the following theorem by Chung \cite{chung89}. A \textit{\textbf{path partition}} of a tree $T$ is a partition of the edges of $T$ into paths. A \textit{\textbf{maximum path partition}} is a path partition $P=\{p_1,p_2,...,p_m\}$ of $T$ such that $p_1$ is a maximal path in $T$, $p_2$ is a maximal path in $T-E(p_1)$, and so on. An example of a maximum path partition is given in Figure \ref{max partition}.

\begin{figure} [!htbp]
\begin{center}
  \includegraphics[width=9cm]{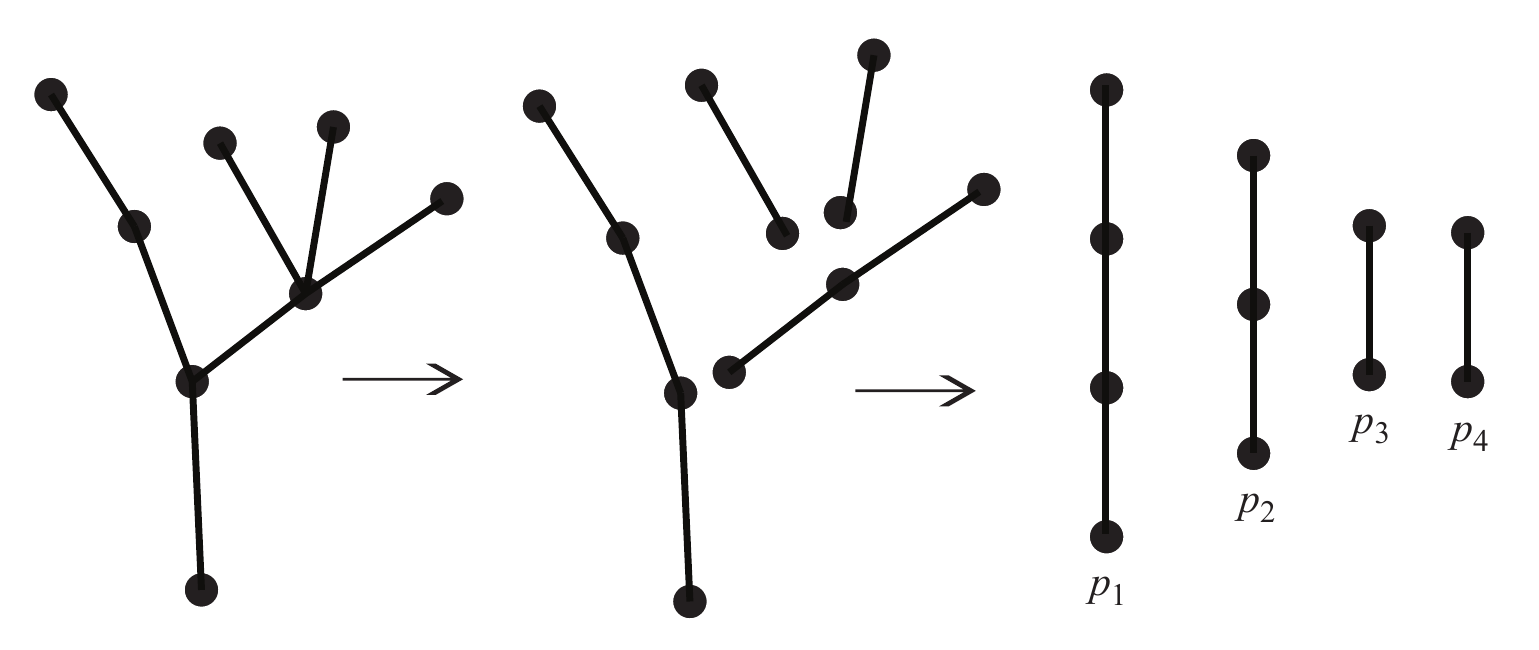}
  \end{center}
\caption{A maximum path partition of a tree.}
 \label{max partition}
\end{figure}

\begin{theorem}[Chung, 1989] \label{chung}
Let $T$ be a tree, and let $P=\{p_1,p_2,...,p_m\}$ be any maximum path partition of $T$, where each path $p_i$ has $\ell_i$ edges. Then $$ p(T)=\left( \sum_{i=1}^m p(p_i) \right) - m + 1= \left( \sum_{i=1}^m 2^{\ell_i} \right) - m + 1.$$ \end{theorem}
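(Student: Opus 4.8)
The plan is to establish the two inequalities $p(T)\ge\left(\sum_{i=1}^m 2^{\ell_i}\right)-m+1$ and $p(T)\le\left(\sum_{i=1}^m 2^{\ell_i}\right)-m+1$ separately; the rightmost equality in the statement is immediate from the known value $p(p_i)=2^{\ell_i}$ for a path with $\ell_i$ edges, so only the middle equality carries content. Throughout I would fix a target $t$, root $T$ at $t$, and measure each vertex by its distance $d(u,t)$. The basic bookkeeping device is the potential $w(C)=\sum_{u\in V}C(u)2^{-d(u,t)}$, which is non-increasing under every pebbling step (a step toward $t$ leaves it fixed, a step away strictly lowers it) and which must satisfy $w(C)\ge 1$ whenever $t$ is reachable, since the final configuration places a full pebble at distance $0$.

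\textbf{Lower bound.} For $p(T)\ge\left(\sum 2^{\ell_i}\right)-m+1$ I would exhibit a single unsolvable configuration of size $\sum_{i=1}^m(2^{\ell_i}-1)=\left(\sum 2^{\ell_i}\right)-m$. Take $t$ to be a leaf endpoint of the longest path $p_1$, and for each $i$ place $2^{\ell_i}-1$ pebbles on the endpoint of $p_i$ lying farthest from $t$, a vertex at distance at least $\ell_i$. The potential of this configuration is at most $m$, so the potential alone does not force unsolvability; instead I would prove unsolvability by a direct integrality argument, tracking how many pebbles can actually cross each junction where a later path attaches to an earlier one.

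\textbf{Upper bound.} I would use strong induction on the number of paths $m$, the base case $m=1$ being the single-path result $p=2^{\ell_1}$. For the step, let $p_m$ be the last path of the maximum partition. Because the partition is greedy and $T$ is a tree, the edges of $p_m$ form a path meeting the rest of $T$ at a single vertex $v$ (a second shared vertex would create a cycle), and deleting the non-$v$ vertices of $p_m$ yields a tree $T'$ for which $\{p_1,\dots,p_{m-1}\}$ is again a maximum path partition; this follows because deleting edges cannot lengthen any path, so each $p_i$ remains a longest path in its residual forest. Writing $|C|=\left(\sum_{i=1}^m 2^{\ell_i}\right)-m+1=p(T')+(2^{\ell_m}-1)$, I would split on the location of the target: if $t\in T'$, collapse the pebbles of $p_m$ toward $v$ and invoke the inductive hypothesis on $T'$; if $t$ lies on $p_m$, treat the pebbles in $T'$ as a source feeding $v$ and reduce to the single-path bound. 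In both cases the quantitative heart is a path-collapse lemma estimating how many pebbles survive the journey to the junction $v$, combined with the pebbles already present on the other side.

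\textbf{Main obstacle.} I expect the genuine difficulty to be the floor in the pebbling move, which destroys additivity of pebble counts across a junction vertex. The potential $w$ yields the necessary condition $w(C)\ge 1$ but is not sufficient: already a single interior junction produces an unsolvable configuration with $w(C)=1$, so neither bound can be read off from the potential. Consequently the naive reduction ``the collapsed configuration on $T'$ has at least $p(T')$ pebbles, hence is solvable'' is too lossy once many pebbles sit at the far end of $p_m$, and the delivery estimate must be argued directly against the target rather than through a clean size comparison. Controlling this rounding loss uniformly at every junction, for both the unsolvability of the extremal configuration and the inductive reduction, is the step I anticipate will require the most care.
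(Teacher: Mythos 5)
First, a point of order: the paper never proves Theorem~\ref{chung} at all --- it is quoted from Chung's paper \cite{chung89} --- so there is no in-paper argument to compare yours against, and your proposal must stand on its own. On its own terms it has two concrete failures, beyond the two steps you explicitly defer. The more serious one is that your lower-bound configuration can be \emph{solvable}, so the ``direct integrality argument'' you postpone does not exist for the configuration as specified. Take $T$ with vertices $a_0,\dots,a_4,b_0,b_1,b_2$ and edges $a_0a_1$, $a_1a_2$, $a_2a_3$, $a_3a_4$, $a_2b_1$, $b_1b_0$, $b_1b_2$. A legitimate maximum path partition is $p_1=a_0a_1a_2a_3a_4$ ($\ell_1=4$), $p_2=b_0b_1b_2$ ($\ell_2=2$), $p_3=a_2b_1$ ($\ell_3=1$), giving $p(T)=16+4+2-3+1=20$. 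Your rule with $t=a_0$ places $15$ pebbles on $a_4$, $3$ pebbles on $b_0$ (or $b_2$), and $1$ pebble on $b_1$ (the endpoint of $p_3$ farthest from $t$), for a total of $19$. This is solvable: the $3$ pebbles on $b_0$ send one pebble to $b_1$, the resulting $2$ pebbles at $b_1$ send one to $a_2$, the $15$ pebbles on $a_4$ send $3$ more to $a_2$, and the $4$ pebbles now at $a_2$ deliver one to $a_0$. An unsolvable configuration of size $19$ does exist ($15$ on $a_4$, $3$ on $b_0$, $1$ on $b_2$), but it corresponds to the \emph{other} maximum path partition $\{p_1,\, a_2b_1b_0,\, b_1b_2\}$. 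Since the theorem is asserted for an arbitrary maximum path partition, your placement rule (``farthest endpoint of each $p_i$'') is simply not correct as stated; fixing it requires either a subtler placement or a proof that one may pass to a well-chosen partition, neither of which is in the proposal.

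The same tree breaks the structural claim driving your induction for the upper bound: the last path $p_3=a_2b_1$ meets the rest of $T$ at \emph{two} vertices, $a_2\in p_1$ and $b_1\in p_2$. Your parenthetical justification (``a second shared vertex would create a cycle'') fails because $T-E(p_3)$ is disconnected, so the two attachment points close no cycle. Consequently your $T'$ is not obtained by pruning a pendant path --- here every vertex of $p_3$ is shared, so nothing gets deleted and the induction becomes circular --- and the inductive step collapses. A plausible repair is to remove instead a path of the partition that is genuinely pendant (one always exists) and then re-verify that the remaining paths, in their original order, still form a maximum path partition of the smaller tree; but that verification, together with the two analytic cores you leave open (the unsolvability argument and the ``path-collapse lemma'' controlling rounding loss at junctions, whose naive form you yourself show is too lossy), is precisely where the content of Chung's theorem lives. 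As it stands the proposal is a strategy outline whose two concrete claims both fail on a seven-edge example.
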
  \noindent For example, the pebbling number of the graph in Figure \ref{max partition} is $(2^3+2^2+2^1+2^1)-4+1=13$.

\section{Weighted Pebbling Numbers}

If $G$ is a weighted graph with a weight of $1/2$ on each edge, then we can think of the pebbling step from $u$ to $v$ as multiplying the number of pebbles being moved by the weight on the edge $uv$, rounding down.  We generalize this idea to say that if $uv$ has some other edge weight $w$ between 0 and 1, the \textit{\textbf{pebbling step}} along $uv$ removes $k$ pebbles from $u$ and places $\lfloor w k \rfloor$ pebbles on $v$, for some positive integer $k$.

Formally, given a graph $G$, a \textit{\textbf{weight distribution}} $W$ on $G$ is an assignment of weights to the edges of $G$.  
For each edge $e$, we denote its weight by $w_e$, and we denote the corresponding weighted graph by $G_W$.  We require $0 \leq w_e \leq 1$ for all edge weights $w_e$.   The \textit{\textbf{total weight}} $|W|$ of $W$ is $\sum_{e \in E(G)} w_e$.

A weighted graph $G_W$ is \textit{\textbf{$p$-solvable}} if every configuration of $p$ pebbles on $G$ is solvable to every target vertex.  
The \textit{\textbf{weighted pebbling number}} of a weighted graph $wp(G_W)$ is the smallest positive integer $p$ for which $G_W$ is $p$-solvable.  Figure \ref{distribute} shows that different choices of weight distributions on the same graph yield different pebbling numbers.  An unweighted graph $G$ is \textbf{\textit{$(W,p)$-solvable}} if $G_W$ is $p$-solvable, and $G$ is \textbf{\textit{$(w,p)$-solvable}} for some positive real number $w$ if there exists a weight distribution $W$ on $G$ with $|W|=w$ such that $G$ is $(W,p)$-solvable.  The \textit{\textbf{weighted pebbling number}}, $wp(G)$, of an unweighted graph $G$ is the smallest positive integer $p$ for which $G$ is $(|E(G)|/2, p)$-solvable.  Equivalently, $wp(G)= \min_{|W|=E(G)/2} wp(G_W)$.  In other words, the weighted pebbling number of $G$ is the smallest number $p$ of pebbles such that there exists a weight distribution on $G$ of weight $|E(G)|/2$ for which all configurations of $p$ pebbles on $G$ are solvable.  

\begin{figure} [!htbp]
\begin{center}
  \includegraphics[width=10cm]{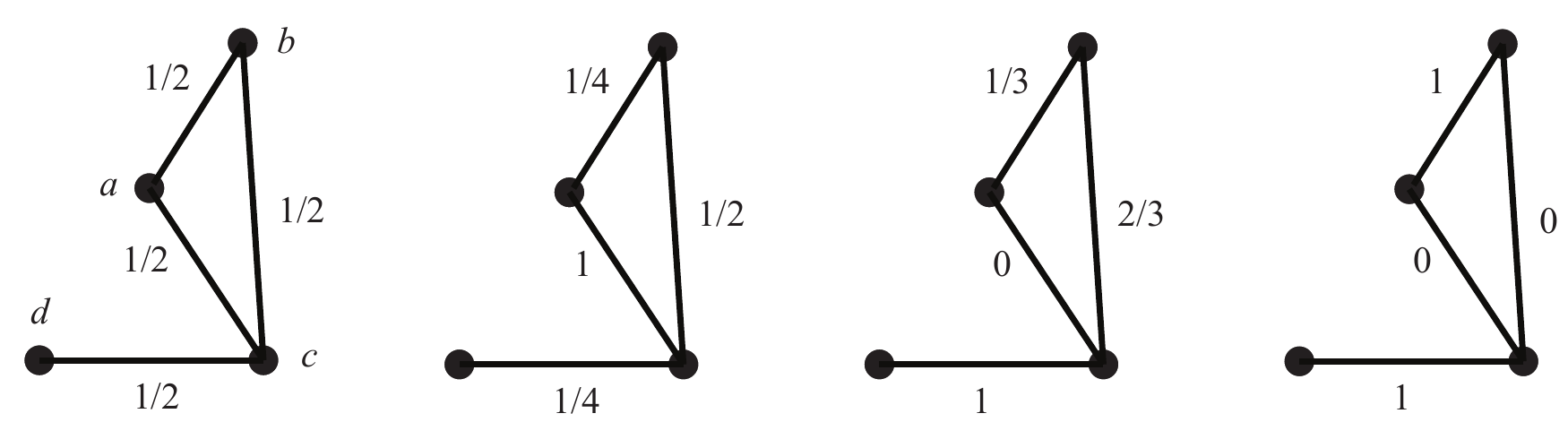}
  \end{center}
\caption{A graph with four different weight distributions.  The first three weighted graphs have weighted pebbling numbers 5, 8, and 6, respectively (Note for example in the first graph that 3 pebbles on vertex $a$, and 1 pebble on $b$ is not solvable to $d$).  The last weight distribution is not $p$-solvable for any $p$.}
 \label{distribute}
\end{figure}

Since placing weights of $1/2$ on all the edges yields the standard pebbling number, $wp(G) \leq p(G)$ for all graphs $G$.  On the other hand, the pebbling number and weighted pebbling number of $G$ can be arbitrarily far apart.  For example, $p(K_n)=n$ \cite{hurlbert99}, but $wp(K_n)=1$ for all $n \geq 4$.  Figure \ref{k4} shows a weight distribution on $K_4$ which is $1$-solvable.  More generally, the following proposition shows that $wp(G)=1$ for all graphs with $n$ vertices and at least $2n-2$ edges.

\begin{figure} [!htbp]
\begin{center}
  \includegraphics[width=2.5cm]{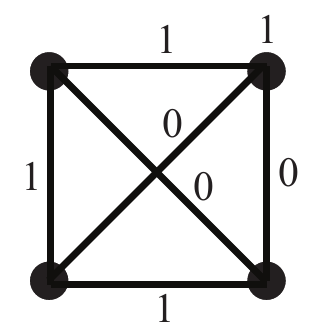}
  \end{center}
\caption{A weight distribution and pebbling configuration on $K_4$ that shows $wp(K_4)=1$.}
 \label{k4}
\end{figure}

\begin{proposition} \label{2n-2}
Let $G$ be a connected graph with $n$ vertices.  Then $wp(G)=1$ if and only if $G$ has at least $2n-2$ edges.
\end{proposition}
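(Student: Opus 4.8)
The plan is to reduce the whole statement to one clean observation about how a \emph{lone} pebble behaves. The crux is the floor in the pebbling step: when exactly one pebble sits on a vertex, a step across an edge of weight $w$ produces $\lfloor 1\cdot w\rfloor$ pebbles, which equals $1$ if $w=1$ and $0$ otherwise (since $0\le w_e\le 1$). Hence a single pebble can traverse \emph{only} edges of weight exactly $1$. My first step is therefore to record the characterization: for a distribution $W$, the weighted graph $G_W$ is $1$-solvable if and only if the edges of weight $1$ form a connected spanning subgraph of $G$. One direction is immediate — if that subgraph is connected and spanning, a lone pebble placed on any vertex $v$ can be walked to any target $t$ along weight-$1$ edges, each step preserving the single pebble. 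For the other direction, if two vertices lay in different components of the weight-$1$ subgraph (or some vertex met no weight-$1$ edge at all), then the one-pebble configuration supported at one of them could never reach the other, so $G_W$ would fail to be $1$-solvable.

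For the forward implication, suppose $wp(G)=1$, witnessed by a distribution $W$ with $|W|=|E(G)|/2$ making $G_W$ $1$-solvable. By the characterization the weight-$1$ edges form a connected spanning subgraph, and any connected spanning subgraph on $n$ vertices has at least $n-1$ edges. These edges alone contribute at least $n-1$ to the total weight, so $|E(G)|/2=|W|\ge n-1$, which rearranges to $|E(G)|\ge 2n-2$.

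For the converse, assume $|E(G)|\ge 2n-2$; I would exhibit an explicit $1$-solvable distribution of total weight $|E(G)|/2$. Since $G$ is connected it has a spanning tree $T$ with $n-1$ edges, so I assign weight $1$ to every edge of $T$. By the characterization $G_W$ is already $1$-solvable, and this uses weight $n-1$. The leftover weight is $|E(G)|/2-(n-1)$, which is nonnegative exactly because $|E(G)|\ge 2n-2$. If it is $0$ (the boundary case $|E(G)|=2n-2$) I set every non-tree edge to $0$ and am done; if it is positive then $|E(G)|>2n-2\ge n-1$, so non-tree edges exist, and spreading the leftover uniformly gives each a weight $\bigl(|E(G)|/2-(n-1)\bigr)\big/\bigl(|E(G)|-(n-1)\bigr)$. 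This value lies in $(0,1]$ because its numerator is positive and at most its denominator, so $W$ is a legitimate distribution of total weight $|E(G)|/2$, finishing the construction.

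The only place demanding genuine care is the characterization in the first step: I must be precise that a single pebble is immobilized across any edge of weight less than $1$, and that reachability for every source–target pair is \emph{equivalent} to connectivity of the weight-$1$ spanning subgraph. Once that equivalence is nailed down, both directions are short — the forward direction is the bound ``connected spanning $\Rightarrow$ at least $n-1$ edges,'' and the converse is the spanning-tree-plus-leftover construction — and the edge cases ($|E(G)|=2n-2$, and the trivial $n=1$ graph) fall out automatically.
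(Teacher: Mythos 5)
Your proof is correct and follows essentially the same route as the paper's: the forward direction via a weight-$1$ spanning tree with leftover weight spread on the remaining edges, and the converse via the observation that a lone pebble moves only along weight-$1$ edges, forcing a connected spanning weight-$1$ subgraph and hence total weight at least $n-1$. Your write-up simply makes explicit (as a characterization lemma) and more careful (the leftover-weight bookkeeping) what the paper states in passing.
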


\begin{proof}
Suppose $G$ has at least $2n-2$ edges.  Let $T$ be a spanning tree of $G$, and assign each edge of $T$ a weight of 1.  Then assign any remaining weight arbitrarily to the other edges of $G$, keeping the constraint $0 \leq w_e \leq 1$.  
This weight distribution has pebbling number 1, since any single pebble can move from any vertex of $G$ to any other vertex along a path through $T$ with no loss of pebbles.  Finally, no configuration of 0 pebbles is solvable, so $wp(G)=1$.

Conversely, suppose $wp(G)=1$.  Then there exists a weight distribution on $G$ for which every configuration of 1 pebble is solvable to every target.  Then for this weight distribution, there exists a path between every pair of vertices of $G$ consisting of edges with weight 1.  This requires at least $n-1$ edges of weight 1, so this weight distribution has total weight at least $n-1$.  Thus, $G$ has at least $2n-2$ edges.
\end{proof}

Note that if $G_W$ is a weighted graph, then $wp(G_W)=wp(G'_W)$, where $G'_W$ is the graph obtained from $G_W$ by deleting every edge of $G_W$ with weight 0 and contracting every edge of $G_W$ with weight 1.

\begin{proposition} \label{2n-k}
For any connected graph $G$ with $n$ vertices and $2n-2-k$ edges, where $k$ is a positive integer, $wp(G)\leq 2^k$.
\end{proposition}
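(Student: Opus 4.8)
The plan is to exhibit a single weight distribution $W$ on $G$ with $|W| = |E(G)|/2$ for which $wp(G_W) \leq 2^k$; since $wp(G) = \min_{|W| = |E(G)|/2} wp(G_W)$, this suffices. Note that the total weight available is $|E(G)|/2 = (2n-2-k)/2 = n-1-\tfrac{k}{2}$.

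First I would fix a spanning tree $T$ of $G$, which has $n-1$ edges. Because $G$ is connected it has at least $n-1$ edges, so $2n-2-k \geq n-1$ and hence $k \leq n-1$; in particular $n-1-k \geq 0$, so the following assignment makes sense. I would place weight $1$ on $n-1-k$ of the edges of $T$, weight $\tfrac12$ on the remaining $k$ edges of $T$, and weight $0$ on every edge of $G$ not in $T$. The total weight is then $(n-1-k)\cdot 1 + k\cdot\tfrac12 + 0 = n-1-\tfrac{k}{2}$, exactly the allowed budget, so $W$ is a valid distribution.

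Next I would invoke the remark following Proposition \ref{2n-2}: deleting the weight-$0$ edges and contracting the weight-$1$ edges leaves the weighted pebbling number unchanged. Deleting the non-tree edges and contracting the $n-1-k$ weight-$1$ tree edges turns $T$ into a tree $T'$ on $k+1$ vertices with exactly $k$ edges, each of weight $\tfrac12$. Since a uniform weight of $\tfrac12$ on all edges reproduces ordinary pebbling, the weighted pebbling number of this tree equals its ordinary pebbling number, and combined with the remark we obtain $wp(G_W) = p(T')$.

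Finally I would bound $p(T')$ using Chung's Theorem (Theorem \ref{chung}). For a maximum path partition $\{p_1,\dots,p_m\}$ of $T'$ with $p_i$ having $\ell_i$ edges and $\sum_i \ell_i = k$, it gives $p(T') = \sum_{i=1}^m 2^{\ell_i} - m + 1$. The only real content beyond bookkeeping is the elementary inequality $\sum_{i=1}^m 2^{\ell_i} \leq 2^{\sum_i \ell_i} = 2^k$, which follows by repeated application of $2^a + 2^b \leq 2^{a+b}$ for $a,b \geq 1$ (equivalently $(2^a-1)(2^b-1)\geq 1$). Since also $-m+1 \leq 0$, this yields $p(T') \leq 2^k$, and hence $wp(G) \leq wp(G_W) = p(T') \leq 2^k$, as claimed. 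Equality holds throughout precisely when $m=1$, i.e. when $T'$ is a single path, which happens when the $k$ half-weight edges are chosen to form a path in $T$.
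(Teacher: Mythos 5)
Your proposal is correct and follows essentially the same route as the paper: the same weight distribution (weight $1$ on $n-1-k$ spanning-tree edges, $1/2$ on the remaining $k$, $0$ elsewhere), the same contraction/deletion reduction to a tree of $k$ half-weight edges, and the same appeal to Chung's theorem. The only difference is cosmetic: where the paper cites the fact that the path maximizes the pebbling number among trees with $k$ edges, you prove that bound directly via $2^a+2^b\leq 2^{a+b}$, which is a nice self-contained touch but not a different argument.
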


\begin{proof}
Let $T$ be a spanning tree of $G$, and let $W$ be a weight distribution on $G$ with weight 1 on $n-1-k$ arbitrarily chosen edges of $T$, weight $1/2$ on the remaining $k$ edges of $T$, and weight $0$ on the remaining edges of $G$.  Then $|W|=n-1-k+k/2= (2n-2-k)/2 = |E(G)|/2$.  Let $G'$ be the graph obtained from $G_W$ by deleting the edges with weight 0 and contracting the edges with weight 1.  Since the remaining edges of $G_W$ have weight $1/2$, $wp(G_W)=p(G')$.  Since $G'$ is a tree, its pebbling number is given by Theorem~\ref{chung} above.  This formula is bounded above by the pebbling number of a path with the same number of edges, so $wp(G_W) \leq 2^k$. Finally, $wp(G)\leq wp(G_W)$ for any weight distribution $W$ of weight $|E(G)|/2$ on $G$.
\end{proof}

\noindent Proposition~\ref{starpf} will demonstrate that the bound in Proposition~\ref{2n-k} is not sharp.

\begin{corollary} \label{2n-3}
For any connected graph $G$ with $n$ vertices and $2n-3$ edges, $wp(G)=2$.
\end{corollary}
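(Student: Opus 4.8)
The plan is to apply the proposition just proved, Proposition~\ref{2n-k}, to the case $k=1$. A connected graph $G$ with $n$ vertices and $2n-3$ edges has exactly $2n-2-1$ edges, so Proposition~\ref{2n-k} immediately gives the upper bound $wp(G) \leq 2^1 = 2$. The work, therefore, reduces entirely to establishing the matching lower bound $wp(G) \geq 2$, equivalently that $wp(G) \neq 1$.

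For the lower bound, I would invoke Proposition~\ref{2n-2}, which characterizes exactly when $wp(G)=1$: a connected graph $G$ on $n$ vertices satisfies $wp(G)=1$ if and only if $G$ has at least $2n-2$ edges. Since our graph $G$ has $2n-3$ edges, and $2n-3 < 2n-2$, the ``only if'' direction of Proposition~\ref{2n-2} shows that $wp(G) \neq 1$. Because the weighted pebbling number is a positive integer by definition (the smallest positive integer $p$ for which $G$ is $(|E(G)|/2,p)$-solvable), ruling out $wp(G)=1$ forces $wp(G) \geq 2$.

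Combining the two bounds yields $wp(G)=2$. This is essentially a one-line deduction from the two preceding results, so there is no serious obstacle; the only point requiring a moment's care is confirming that Proposition~\ref{2n-2} applies in the correct logical direction. One should verify that $G$ being connected on $n$ vertices with $2n-3$ edges genuinely falls under the hypothesis that it has \emph{fewer} than $2n-2$ edges, so that the contrapositive of the ``if'' direction (or equivalently the ``only if'' direction) correctly rules out the value $1$. Since the arithmetic $2n-3 < 2n-2$ holds for every $n$, this is automatic, and the corollary follows directly.
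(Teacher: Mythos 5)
Your proposal is correct and matches the paper's proof exactly: the upper bound $wp(G)\leq 2$ comes from Proposition~\ref{2n-k} with $k=1$, and the lower bound $wp(G)>1$ comes from the characterization in Proposition~\ref{2n-2}, since $2n-3<2n-2$. The paper states this in two lines; your additional care about the logical direction of Proposition~\ref{2n-2} is sound but not a departure from the argument.
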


\begin{proof}
By Proposition~\ref{2n-2}, $wp(G)>1$.  By Proposition~\ref{2n-k}, $wp(G) \leq 2$.
\end{proof}

Proposition~\ref{2n-2} tells the complete story for weighted pebbling numbers of connected graphs with a large number of edges, so we turn our attention for the remainder of the paper to weighted pebbling numbers of connected graphs with a small number of edges, in particular paths and trees.  

The \textit{\textbf{requirement}} of an edge $e=uv$ is the smallest number of pebbles needed on $u$ to move at least one pebble across $uv$ to $v$.  If $e$ has edge weight $w_e$, then $e$ has requirement $r_e= \lceil 1/w_e \rceil$.  The \textit{\textbf{requirement}} $r(P)$ of a directed path $P$ with edges $e_1=v_1v_2$ through $e_k=v_kv_{k+1}$ is the smallest number of pebbles needed on $v_1$ to move at least one pebble along $P$ to $v_{k+1}$.  If edge $e_i$ has weight $w_{e_i}$, then the requirement of $P$ is  $\lceil 1/w_{e_1} \lceil 1/w_{e_2} \cdots \lceil 1/w_{e_k} \rceil \rceil \cdots \rceil$.  Note that this number depends on the direction that $P$ is traversed.  For example, if $e_1=v_1 v_2$ and $e_2=v_2 v_3$ have weights $w_{e_1}=2/5$ and $w_{e_2}=1/2$, then $r(v_1 v_2 v_3)=5$ but $r(v_3 v_2 v_1)=6$.

Let $T$ be a weighted tree with designated target $t$.  A \textit{\textbf{maximum path partition}} of $T$ is a path partition $P=\{p_1,p_2,...,p_m\}$ of $T$ such that $p_1$ is a directed path in $T$ ending at $t$ with maximal requirement, $p_2$ is a directed path in $T-E(p_1)$ ending at a vertex of $p_1$ with maximal requirement, $p_3$ is a directed path in $T-E(p_1 \cup p_2)$ ending in $p_1 \cup p_2$ with maximal requirement, and so on.

Curtis, et.~al.~generalized Theorem~\ref{chung} for weighted graphs with integer edge weights corresponding to our edge requirements. This is equivalent to restricting our edge weights to be of the form $1/n$, for $n$ an integer.  We now extend their result to all rational edge weights.  
Let $wp(G, t)$ be the weighted pebbling number of a graph $G$ with a specified target $t$, and note that $wp(G)= \max_{t}[wp(G,t)]$.
The following is a modification of Theorem 6 \cite{curtis09}.

\begin{theorem}
\label{BigTheorem}
Suppose $T_W$ is a weighted tree with designated target $t$.  Let $P=\{p_1, \ldots, p_m\}$ be a maximum path partition of $T$ with respect to $W$ and $t$.  Then the weighted pebbling number given target $t$ is
$$wp(T_W, t)=\left( \sum_{i=1}^m r(p_i) \right) - m + 1.$$

\end{theorem}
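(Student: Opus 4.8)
The plan is to pin down $wp(T_W,t)$ by proving the two matching inequalities $wp(T_W,t)\le \sum_{i=1}^m r(p_i)-m+1$ and $wp(T_W,t)>\sum_{i=1}^m r(p_i)-m$, following the architecture of Chung's proof of Theorem~\ref{chung} and its integer-weight refinement by Curtis et al., but adapting every step to directed requirements. The essential new difficulty, and the reason the integer-weight theorem cannot be invoked as a black box, is that the requirement of a directed path is a \emph{nested ceiling} rather than a product: if one concatenates $p_i$ with an adjacent $p_j$, the requirement of the longer path is not determined by $r(p_i)$ and $r(p_j)$. Consequently one cannot simply collapse each $p_i$ to a single edge of weight $1/r(p_i)$ and quote the $1/n$ case, because such a collapse fails to preserve the requirements of the longer paths that a maximum path partition of the collapsed tree could use, and so need not even preserve which partition is maximum. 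Throughout I would lean on a \emph{path lemma}: on a single directed path $P$ toward its terminal vertex, the configuration of a given size that is hardest to push forward is the one with all pebbles at the initial vertex; in particular any configuration of $r(P)$ pebbles on $P$ can deliver a pebble to the end.

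For the lower bound I would exhibit an unsolvable configuration of size $\sum_{i=1}^m r(p_i)-m$, namely $r(p_i)-1$ pebbles at the initial vertex $s_i$ (the end farthest from $t$) of each path $p_i$. The key observation is clean: since $r(p_i)-1<r(p_i)$, the pebbles native to $p_i$ cannot deliver even a single pebble across $p_i$ to its attachment vertex. Argue by induction on $m$, peeling the leaf path $p_m$, which attaches to $T'=T-(p_m\setminus\{v\})$ at $v$. The $r(p_m)-1$ pebbles on $p_m$ push \emph{zero} pebbles onto $v$, and a standard no-detour exchange argument shows that sending pebbles from $T'$ out onto the pendant path $p_m$ and back never helps; hence $p_m$ is decoupled and any solution would have to succeed using only the pebbles on $T'$. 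But those pebbles are exactly the analogous extremal configuration for $T'$ with maximum path partition $\{p_1,\dots,p_{m-1}\}$, which is unsolvable by the inductive hypothesis (with $t\in p_1\subseteq T'$). This is the cleaner of the two directions.

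For the upper bound I would induct on $m$, again peeling the leaf path $p_m$ at its attachment vertex $v$, so that $\{p_1,\dots,p_{m-1}\}$ is a maximum path partition of $T'$ and, by induction, $wp(T'_W,t)=A+1$ where $A=\sum_{i=1}^{m-1}r(p_i)-(m-1)$; note that $N:=\sum_{i=1}^m r(p_i)-m+1=A+r(p_m)$. Given a configuration of $N$ pebbles, let $N'$ be the number on $T'$ and $s=N-N'$ the number on $p_m\setminus\{v\}$. If $N'\ge A+1$ we finish immediately by induction. Otherwise $N'\le A$, so $s\ge r(p_m)$, and by the path lemma we can push pebbles off $p_m$ onto $v$; if this raises the $T'$-count to at least $A+1$ we again finish by induction.

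The remaining case is the genuine obstacle: the pebbles are so concentrated on $p_m$ that, because of the multiplicative floor loss incurred in pushing them to $v$, the $T'$-count stays below $A+1$ even after pushing. Here I would \emph{not} route to $v$ but instead route the pebbles on $p_m$ directly to $t$ along the directed path through $v$. Maximality of the partition forces the requirement of every directed path ending at $t$ to be at most $r(p_1)$, and $N=\sum_{i=1}^m r(p_i)-m+1\ge r(p_1)$ since each $r(p_i)\ge 1$; combining these with the path lemma should show that a configuration too concentrated for the peeling step is instead solvable directly along a maximal path. Balancing these two modes of play, peeling onto $v$ versus routing all the way to $t$, and making the case boundary explicit by quantifying exactly how many pebbles $p_m$ must shed before $T'$ becomes solvable, is the crux of the argument; together with establishing the path lemma in the presence of floors, this is where the main technical effort and the main risk of error lie.
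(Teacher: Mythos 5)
Your lower bound is the same as the paper's: the paper, too, exhibits the configuration with $r(p_i)-1$ pebbles on the initial vertex of each directed path $p_i$ and asserts its unsolvability in one sentence; your induction with the no-detour exchange argument supplies the justification the paper omits, and it is sound. Your reason for refusing to invoke the Curtis et al.\ integer-weight theorem as a black box is also well taken, and it is in fact a criticism of the paper's own proof, whose entire upper-bound argument is exactly that reduction (``replace the edge weights by the edge requirements''). Replacing each $w_e$ by $1/\lceil 1/w_e\rceil$ turns the nested-ceiling requirement of a path into the product of its edge requirements, which can be strictly larger: for a directed two-edge path with both weights $2/5$ the requirement is $\lceil \tfrac{5}{2}\lceil \tfrac{5}{2}\rceil\rceil = 8$, while after replacement (both weights $1/3$) it is $9$; since lowering weights only makes pebbling harder, the reduction yields $wp \leq 9$ where the theorem asserts $wp = 8$. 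So you are right that the upper bound needs an argument that works with nested ceilings directly.

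However, your own upper-bound argument has a genuine gap, precisely at the case you flag as ``the crux,'' and the two modes you propose (peel onto $v$, or route all the way to $t$) demonstrably do not cover all configurations. Take the star with leaves $a$, $b$, $t$ and center $c$, all weights $1/2$, target $t$: then $p_1 = a\to c\to t$ with $r(p_1)=4$, $p_2=b\to c$ with $r(p_2)=2$, so $N=5$, $T'=p_1$, and $A=3$. Consider the configuration with $2$ pebbles on $a$ and $3$ on $b$. Here $N'=2\leq A$; pushing from $p_2$ delivers only $\lfloor 3/2\rfloor = 1$ pebble to $c$, raising the $T'$-count to $3=A$, below the $A+1$ your inductive hypothesis requires; and routing along $b\to c\to t$ fails because that path carries only $3 < 4 = r(b\to c\to t)$ pebbles. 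Yet the configuration is solvable---$a$ and $b$ each deliver one pebble to $c$, and the two combine to cross $ct$---your framework simply cannot certify it. The missing idea is a stronger inductive statement: the hypothesis ``every configuration of $A+1$ pebbles on $T'$ is solvable'' cannot exploit pebbles delivered to $v$ when the $T'$-count stays at or below $A$, whereas the result of Curtis et al.\ that the paper cites is a statement about delivering $k$ pebbles (this is the ``$k$'' that the paper specializes to $1$), and it is that kind of strengthened hypothesis that lets contributions from the peeled path and from $T'$ be combined at interior vertices. Until you either strengthen the induction in this way or otherwise dispose of configurations like the one above, the upper bound---and hence the theorem---is not proved.
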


\begin{proof}  Begin by replacing the edge weights on $T_W$ with the edge requirements.  In this context, Curtis, et.~al.~proved 
$wp(T_W, t)\leq \left( \sum_{i=1}^m r(p_i) \right) - m + 1$, (with $k=1$) \cite{curtis09}. 
We obtain the reverse inequality by noting that the configuration which places $r(p_i)-1$ pebbles on the initial vertex of directed path $p_i$, for all $1\leq i \leq m$, is not solvable to the target $t$. 
\end{proof}

\begin{corollary}
\label{PathPebblingNumber}
If $G$ is a path, $wp(G)=p(G)$.
\end{corollary}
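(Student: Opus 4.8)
The plan is to sandwich $wp(G)$ between $p(G)$ and $p(G)$. Since assigning weight $1/2$ to every edge recovers the ordinary pebbling number, we already have $wp(G) \le p(G)$, and for a path with $n$ edges $p(G) = 2^n$. Thus it suffices to prove the reverse bound $wp(G) \ge 2^n$, i.e.\ that \emph{every} weight distribution $W$ with total weight $|E(G)|/2 = n/2$ satisfies $wp(G_W) \ge 2^n$; since $wp(G) = \min_{|W|=|E(G)|/2} wp(G_W)$, this forces equality.

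Fix such a $W$ and let the edges, read from one endpoint $u$ to the other endpoint $t$, carry weights $w_1, \dots, w_n$. I take $t$ as the target. The whole path is the unique maximal directed path ending at $t$, so its maximum path partition is the single directed path $P = u \cdots t$ (here $m=1$), and Theorem~\ref{BigTheorem} gives $wp(G_W, t) = r(P)$; alternatively, placing $r(P) - 1$ pebbles on $u$ is by definition unsolvable to $t$. Now I bound $r(P)$ from below. First dispose of the degenerate case: if some $w_i = 0$ then no pebbles cross that edge, $G_W$ is not $p$-solvable for any $p$, and such a $W$ cannot lower the minimum. Otherwise all $w_i > 0$, and stripping the ceilings in $r(P) = \lceil \tfrac{1}{w_1}\lceil \cdots \lceil \tfrac{1}{w_n}\rceil\cdots\rceil\rceil$ via $\lceil x\rceil \ge x$ at each stage yields $r(P) \ge \prod_{i=1}^n \tfrac{1}{w_i} = 1/\prod_{i=1}^n w_i$.

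It remains to bound the product. By the AM--GM inequality, $\prod_{i=1}^n w_i \le \big(\tfrac1n\sum_{i=1}^n w_i\big)^n = (1/2)^n$, using $\sum_i w_i = n/2$. Hence $r(P) \ge 1/\prod_i w_i \ge 2^n$, so $wp(G_W) \ge wp(G_W,t) = r(P) \ge 2^n$. As $W$ was arbitrary among positive distributions of the required total weight, $wp(G) \ge 2^n = p(G)$. The only real subtleties I anticipate are the ceiling-stripping step—where one must check that each nested ceiling can only increase the running product—and remembering to exclude the zero-weight distributions before invoking AM--GM; the inequality itself then does all the work, and its equality case $w_i \equiv 1/2$ reassuringly matches the uniform distribution that realizes $p(G)$.
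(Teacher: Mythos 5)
Your proof is correct and follows essentially the same route as the paper's: apply Theorem~\ref{BigTheorem} (the whole path being its own maximum path partition), strip the ceilings in the requirement to get $r(P)\geq \prod_e 1/w_e$, and note that the product of the weights is maximized when all weights are equal, giving the bound $2^n$, with the reverse inequality $wp(G)\leq p(G)$ being immediate. Your version is slightly more careful than the paper's — you make the AM--GM step explicit, handle zero-weight edges, and offer the unsolvable configuration of $r(P)-1$ pebbles as a self-contained fallback — but the underlying argument is the same.
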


\begin{proof}
Since a maximum path partition of $G$ is just $p_1=G$, by Theorem~\ref{BigTheorem}, $wp(G)=r(G) \geq \prod_{e \in E(G)} 1/w_e$.  This product is minimized if the weights $w_e$ are all equal, in which case we have $wp(G) \geq 2^{n-1}$.  Conversely, $wp(G) \leq p(G)=2^{n-1}$.
\end{proof}

As with paths, the weighted pebbling number of stars coincides with the pebbling number.  

\begin{remark}
In the upcoming Propositions \ref{starpf} and \ref{thm:1/3-tree} and in Theorem \ref{thm:1/3-required}, we use the results of a computer search.  
Consider a path $P=x_1 x_2 \ldots x_k$ with $p$ pebbles on $x_1$.  To find the minimum weight required to move at least one pebble from $x_1$ to $x_k$, the program searches through all $k$-tuples $(p=p_1, p_2, \ldots, p_{k-1},p_k=1) \in \Z^k$ with $p_i \geq p_{i+1}$ for all $i$, and returns the minimum sum, over all such $k$-tuples, of the minimum weight $p_{i+1}/p_i$ required on each edge.  

To find the minimum weight required to move $p$ pebbles from either $x_1$ to $x_k$ or from $x_k$ to $x_1$, 
the program looks at pairs of tuples, $(p=p_1, \ldots, p_k=1)$ and $(1=q_1, \ldots, q_k=p)$ which denote the intermediate pebbles as $p$ pebbles are moved from $x_1$ to $x_k$ and from $x_k$ to $x_1$ respectively.  The program chooses the maximum weight of the two possible required edge weights (one for each direction) and returns the minimal sum of these edge weights, over all pairs of such $k$-tuples.  This strategy is also used in the proof of Theorem \ref{p2-theorem}.
\label{rem:CompSearch}
\end{remark}

Let $S_k$ be the star with $k$ leaves.

\begin{proposition}\label{starpf}
$wp(S_k)=p(S_k)=k+2$ for all $k \geq 2$.
\end{proposition}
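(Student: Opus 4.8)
The plan is to sandwich $wp(S_k)$ between two matching bounds. For the upper bound I would first invoke the general inequality $wp(S_k)\le p(S_k)$ and then compute $p(S_k)$ directly from Chung's formula (Theorem~\ref{chung}). A maximum path partition of $S_k$ consists of one maximal two-edge path $v_i c v_j$ through the center $c$, together with the remaining $k-2$ edges, each a single-edge path; hence $m=k-1$ and
\[
p(S_k)=\bigl(2^2+(k-2)2^1\bigr)-(k-1)+1=k+2 .
\]
This already yields $wp(S_k)\le k+2$, so the work is all in the reverse inequality.

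For the matching lower bound I would show that no weight distribution can beat the uniform one. Write $w_i$ for the weight on the edge $cv_i$ joining $c$ to the leaf $v_i$, and let $r_i=\lceil 1/w_i\rceil$ be its requirement. Fix a distribution $W$ with $|W|=k/2$ and suppose, for contradiction, that $wp((S_k)_W)\le k+1$. I would extract two constraints. First, taking the center as target, the configuration placing $r_i-1$ pebbles on each leaf $v_i$ and none on $c$ cannot move any pebble to $c$ (each leaf falls one pebble short of its requirement); since a subconfiguration of an unsolvable configuration is again unsolvable, its size $\sum_i(r_i-1)$ must be at most $k$, i.e.\ $\sum_i r_i\le 2k$ (this is exactly the center-target case of Theorem~\ref{BigTheorem}, and in particular forces every $w_i>0$). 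Second, from $r_i\ge 1/w_i$ I get $\sum_i 1/r_i\le\sum_i w_i=k/2$.

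The crux — and the step I expect to be the main obstacle — is to combine these two inequalities to pin the $r_i$ down exactly, since each constraint alone still permits non-uniform requirements. I would use the auxiliary function $f(r)=r+4/r$ on the positive integers, which satisfies $f(r)\ge 4$ with equality precisely when $r=2$ (one checks $f(1)=5$, $f(2)=4$, and $f$ is increasing for $r\ge 2$). Summing over the edges,
\[
4k\le\sum_{i=1}^k f(r_i)=\sum_{i=1}^k r_i+4\sum_{i=1}^k\frac{1}{r_i}\le 2k+4\cdot\frac{k}{2}=4k ,
\]
so every inequality is tight and $r_i=2$ for all $i$. Then $\lceil 1/w_i\rceil=2$ forces $w_i\ge 1/2$, and $\sum_i w_i=k/2$ forces $w_i=1/2$ for every $i$; that is, $W$ is the uniform distribution, for which $wp((S_k)_W)=p(S_k)=k+2$, contradicting $wp((S_k)_W)\le k+1$. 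Hence $wp((S_k)_W)\ge k+2$ for every admissible $W$, giving $wp(S_k)\ge k+2$ and completing the equality. The delicate point is precisely the simultaneous use of the weight budget and the center-target bound: it is only their combination through the convex quantity $f$ that rules out every non-uniform requirement vector.
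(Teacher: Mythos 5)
Your proof is correct, and it takes a genuinely different---and more self-contained---route than the paper's. The paper gets the same upper bound from Chung's formula, but for the lower bound it works through Theorem~\ref{BigTheorem}: it forms a maximum path partition of $S_k$, argues that $wp<k+2$ forces either the two-edge path $p_1$ to have requirement less than $4$ or some single edge to have requirement $1$, rules out the first case by a \emph{computer search} (any two-edge path with requirement below $4$ must carry total weight more than $1$, contradicting the choice of $p_1$ as the two lightest edges), and rules out the second case by exhibiting an explicit unsolvable configuration of more than $k+2$ pebbles when $m\geq 1$ edges have weight $1$. You replace all of this with two elementary linear constraints on the requirements $r_i$---namely $\sum_i r_i\leq 2k$, obtained from the unsolvable center-target configuration (this is exactly the easy, lower-bound direction of Theorem~\ref{BigTheorem}, which you prove directly rather than cite), and $\sum_i 1/r_i\leq k/2$ from the weight budget---and then the AM--GM fact that $r+4/r\geq 4$ with equality only at $r=2$, which forces every $r_i=2$, hence every $w_i=1/2$, hence $wp((S_k)_W)=p(S_k)=k+2$, the desired contradiction. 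Your approach buys independence from the computer search and from the Curtis et al.\ upper bound inside Theorem~\ref{BigTheorem}, and it proves slightly more: the uniform distribution is the \emph{unique} weight distribution that could conceivably beat $k+2$, and it fails to. The paper's approach, by contrast, leans on the path-partition machinery and computational tables that it reuses elsewhere (e.g., in Proposition~\ref{thm:1/3-tree} and Theorem~\ref{thm:1/3-required}), at the cost of a less transparent case analysis. One small point to keep explicit in a final write-up: the requirement $r_i=\lceil 1/w_i\rceil$ is only defined when $w_i>0$, so you should note (as you do parenthetically) that $w_i=0$ is impossible, since then no configuration concentrated on $v_i$ is ever solvable to $c$ and the distribution is not $p$-solvable for any $p$.
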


\begin{proof}
First, by Theorem~\ref{chung}, $p(S_k)=k+2$.  So $wp(S_k) \leq k+2$.

Suppose by way of contradiction that $wp(S_k)<k+2$.  So there exists a weight distribution $W$ with total weight $|E(G)|/2$ on $S_k$ with weighted pebbling number less than $k+2$.  We construct a maximal path partition $P=\{p_1, \ldots, p_j\}$ from $W$.  Note that every maximal path partition of $S_k$ has $k-1$ paths, one with two edges and the rest with one edge.  Since $p_1$ is the path with largest requirement in $S_k$ by definition, $p_1$ is the path consisting of the two edges in $S_k$ with smallest weight.  By Theorem~\ref{BigTheorem},
if all the edge weights in $W$ are $1/2$, then $wp(W)=k+2$, with $r(p_1)=4$ and $r(p_i)=2$ for all $1<i \leq k-1$.  Since $wp(W)<k+2$ by assumption, either the requirement of $p_1$ is less than $4$, or the requirement of one of the other paths is less than $2$.

By a computer search (see Remark~\ref{rem:CompSearch}), if $r(p_1)<4$, then the combined weight on the edges of $p_1$ is greater than 1.  Thus, the average weight of the edges of $p_1$ is greater than $1/2$, contradicting our choice of $p_1$.  
Suppose $r(p_i)<2$ for some $1< i \leq k-1$, and suppose the path $p_i$ is the edge $v_0 v_i$, where $v_0$ is the center vertex.  Since the requirement of any path must be at least $1$, $r(p_i)=1$.  So $v_0 v_i$ has weight $1$.  Suppose $m$ of the edges in $W$ have weight $1$, $m \geq 1$.  Then since no edge can have weight $0$, there must be at least $m+1$ edges in $S_k$ with weight less than $1/2$.  Call these edges $v_0 v_1$ through $v_0 v_{m+1}$.  Then the pebbling configuration with target $v_1$, 0 pebbles on $v_0$, $4$ pebbles on $v_2$, $2$ pebbles on each of $v_3$ through $v_{m+1}$, and $1$ pebble on each of the remaining vertices $v_j$ with $w(v_0 v_j)<1$ is not solvable, and has $4+2(m-1)+(k-m)=k+2+m>k+2$ pebbles since $m \geq 1$.
\end{proof}

\section{Minimum pebbling weights of paths of length two}

In this section we explore the minimum total weight required on the edges of a path of length two in order to successfully pebble $p$ pebbles across it.
Given a graph $G$, we define the \emph{\textbf{pebbling weight function}} $w_G$ on positive integers $p$ by
$$w_G(p)=\inf\{\,w\,|\, \text{$G$ is $(w,p)$-solvable}\}.$$
The following lemma shows this infimum is always obtained, so we can replace the infimum in the definition with
the minimum.  In other words, given a graph $G$ and a number of pebbles $p$, $w_G(p)$ is the minimum
weight $w$ such that there exists a weight distribution on $G$ of total weight $w$ for which $G$ is $p$-solvable.

\begin{lemma} \label{rational}
For any $G$ and $p$, there exists a weight distribution $W$ on $G$ with total weight
$w_G(p)$.  Moreover, such a $W$ has each edge weight $w_e$ a rational number, and hence $w_G(p)$ is rational.
\end{lemma}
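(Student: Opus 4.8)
The plan is to regard a weight distribution $W$ as a point in the cube $[0,1]^{E(G)}$ and to minimize the linear functional $|W|=\sum_{e\in E(G)} w_e$ over the set of those $W$ for which $G_W$ is $p$-solvable. The entire difficulty is that this solvable set is not convex, so the strategy is to first show that it is a finite union of axis-parallel boxes with rational vertices, after which minimizing a linear functional over it becomes immediate.

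First I would discretize the weights. Since a starting configuration has exactly $p$ pebbles and a pebbling step replaces $k$ pebbles by $\lfloor w_e k\rfloor \le k$ pebbles, the total number of pebbles never increases; hence every pebbling step used in attempting to solve a size-$p$ configuration moves at most $p$ pebbles across an edge. Consequently the only information about an edge weight $w_e$ that can ever affect the game is the finite list of integers $\lfloor k w_e\rfloor$ for $1\le k\le p$. I would introduce the finite set of breakpoints $R=\{\,j/k : 0\le j\le k\le p\,\}\subseteq[0,1]$ and observe that each function $w_e\mapsto\lfloor k w_e\rfloor$ is constant on every half-open interval between consecutive elements of $R$, since all of its jumps occur at points of $R$.

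Next I would define the \emph{combinatorial type} of $W$ to record, for each edge $e$, which of these half-open intervals contains $w_e$. There are finitely many types, and each type is an axis-parallel box $\prod_{e}[\alpha_e,\beta_e)$ whose left endpoints $\alpha_e$ are rational. The key lemma to establish is that $p$-solvability depends only on the combinatorial type: if $W$ and $W'$ have the same type, then for every configuration the same sequences of pebbling steps are legal and produce identical results, because each step's output $\lfloor k w_e\rfloor$ (with $k\le p$) agrees for $W$ and $W'$. Thus the solvable set is a finite union of such boxes.

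Finally I would optimize. On a solvable box $\prod_e[\alpha_e,\beta_e)$ the functional $\sum_e w_e$ has infimum $\sum_e\alpha_e$, attained at the corner $(\alpha_e)_e$, which lies in the box because each interval is closed on the left and has rational coordinates. Therefore $w_G(p)$ equals the minimum of $\sum_e\alpha_e$ over the finitely many solvable boxes; this minimum is rational, is attained, and is realized by a weight distribution with all edge weights rational, as claimed. The main obstacle is the middle step, verifying that solvability is genuinely constant on each box, which rests on the observation that no reachable configuration ever exceeds $p$ pebbles, so that only the finitely many floors $\lfloor k w_e\rfloor$ with $k\le p$ are ever consulted.
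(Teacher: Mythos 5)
Your proposal is correct and takes essentially the same approach as the paper: the corner $(\alpha_e)_e$ of your box is precisely the paper's rounded distribution $w'_e=\max\left\{\frac{\lfloor a w_e\rfloor}{a} : 1\leq a\leq p\right\}$, and both arguments rest on the same key observation that only the finitely many floors $\lfloor k w_e\rfloor$ with $k\leq p$ can ever influence play, since the pebble count never increases. The paper simply bypasses the explicit box decomposition by mapping each solvable $W$ directly down to this rational corner and then minimizing over the resulting finite set of distributions.
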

\begin{proof}
We introduce a partial order on weight distributions $W$ such that $G$ is $(W,p)$-solvable by setting $W'\leq W$ if $w'_e\leq w_e$ for all $e\in E(G)$.  Suppose $G$ is $(W,p)$-solvable.  We construct a weight distribution $W'$ on $G$ by setting
$$w'_e=\max\left\{\frac{\lf aw_e\rf}{a},1\leq a\leq p\right\}$$
for each $e\in E(G)$.  We claim that $G$ is $(W',p)$-solvable.  To see this, suppose a pebbling step across $e$ starts with $c$ pebbles and ends with $d$ pebbles in $G_W$.  Then by definition $\lfloor c w_e \rfloor =d$, so $d=\lfloor c (\lf cw_e \rf / c) \rfloor \leq cw'_e$, so the same pebbling step ends with at least $d$ pebbles in $G_{W'}$.
So any solution of $G_W$ is also a solution of $G_{W'}$.

Finally, note that $W'$ has rational edge weights, each of which has a numerator and denominator bounded above by $p$.  Let us temporarily call such a distribution \emph{minimal}.  Since an arbitrary weight distribution is bounded below (with respect to our partial order above) by a minimal distribution, we conclude
$$
\inf\{\,w\,|\, \text{$G$ is $(w,p)$-solvable}\}=\inf\{\,|W|\,|\, \text{$G$ is $(W,p)$-solvable}\}= 
$$
$$
\inf\{\,|W|\,|\, \text{$W$ is minimal and $G$ is $(W,p)$-solvable}.\}
$$
But the last set is finite (in fact, of order at most $|E(G)|^{p^2}$) since there are only finitely many ways to place rational numbers with integral numerators and denominators bounded by $p$ on the edges of $G$.  Hence $wp(G)$ is the minimum weight of these minimal distributions.  
\end{proof}

We note that the weight distribution for the minimum pebbling weight of a graph displays some surprising behavior.  For example, the path $P_2$ with 2 edges is solvable in either direction with $p=6$, and weights 1/3 and 1/2 on the two edges. However, $P_2$ is not solvable with 6 pebbles if this same total weight is distributed evenly, with a weight of 5/12 on each edge.  This demonstrates the fact that the weighted pebbling number is not minimized, as one might expect, by distributing the weight equally between the two edges.
In Theorem~\ref{p2-theorem} we give an explicit formula for $w_{P_2}(p)$.  An explicit formula for $w_G(p)$ for any $G$ with a larger number of edges is still unknown.  

\begin{theorem} \label{p2-theorem}
Given a positive integer $p$, let $n=\lfloor\sqrt{p}\rfloor$.  Then
the pebbling weight function for the path graph $P_2$ with two edges
is given by
$$
w_{P_2}(p)=\begin{cases}
\frac{2}{n}&\text{ if }p\in[n^2,n^2+\nf]\\[.5em]
\frac{2n+1}{p}&\text{ if }p\in(n^2+\llf\frac{n}{2}\rrf,n^2+2\llf\frac{n}{2}\rrf]\\[.5em]
\frac{2n+1}{n(n+1)}&\text{ if }p\in(n^2+2\llf\frac{n}{2}\rrf,n^2+n+\llf\frac{n}{2}\rrf\big]\\[.5em]
\frac{2n+2}{p}&\text{ if }p\in (n^2+n+\llf\frac{n}{2}\rrf,n^2+2n].
\end{cases}
$$
\end{theorem}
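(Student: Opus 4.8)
The plan is to reduce the computation of $w_{P_2}(p)$ to a finite optimization over pairs of integers and then to solve that optimization by comparing a handful of explicit candidate weightings. Write $P_2 = x_1x_2x_3$ and let $a$ and $b$ be the weights on $x_1x_2$ and $x_2x_3$. First I would pin down the worst configurations. For the endpoint target $x_3$, a rearrangement argument shows the hardest placement of $p$ pebbles is all $p$ on $x_1$: moving one pebble from $x_1$ toward the target cannot decrease what is delivered, because $\lf(c-1)a\rf\ge\lf ca\rf-1$; and since batching pebbles across an edge only helps, $\lf c_1a\rf+\lf c_2a\rf\le\lf(c_1+c_2)a\rf$, reachability of $x_3$ is exactly $\lf\lf pa\rf b\rf\ge1$. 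Symmetrically, reachability of $x_1$ from $p$ pebbles on $x_3$ is $\lf\lf pb\rf a\rf\ge1$. For the middle target $x_2$ the only obstruction is a split $c_1+c_3=p$ with $c_1a<1$ and $c_3b<1$, which exists iff some integer lies in $(p-1/b,\,1/a)$; this interval is empty whenever $1/a+1/b\le p$, and I would verify that the optimal weightings found below satisfy this for $p\ge4$ (treating $p\le3$ by hand), so $x_2$ imposes no extra constraint. Hence $w_{P_2}(p)=\min(a+b)$ subject to the two crossing conditions.

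Next I would translate these two conditions into the integer program performed by the search in Remark \ref{rem:CompSearch}. Setting $s=\lf pa\rf$ and $t=\lf pb\rf$ for the intermediate pebble counts, feasibility forces $1\le s,t\le p$ together with $a\ge\max(s/p,1/t)$ and $b\ge\max(1/s,t/p)$; conversely, for each integer pair $(s,t)$ with $1\le s,t\le p$ the choice $a=\max(s/p,1/t)$, $b=\max(1/s,t/p)$ lies in $(0,1]$ and satisfies both crossing conditions. Therefore
$$w_{P_2}(p)=\min_{1\le s,t\le p}\left[\max\left(\frac{s}{p},\frac1t\right)+\max\left(\frac1s,\frac tp\right)\right].$$

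The remaining step is to minimize this symmetric function $f(s,t)$. The four elementary bounds $f\ge s/p+1/s$, $f\ge t/p+1/t$, $f\ge 1/s+1/t$, and $f\ge(s+t)/p$ confine every minimizer to $s,t\in\{n,n+1\}$, where $n=\lf\sqrt p\rf$: for $s\le n-1$ or $s\ge n+2$ the single-variable quantity $s/p+1/s$ already meets or exceeds the value $2/n$ attained at $(s,t)=(n,n)$, using $n\le\sqrt p<n+1$. It then suffices to evaluate $f$ at $(n,n)$, at $(n,n+1)\sim(n+1,n)$, and at $(n+1,n+1)$. Which branch of each maximum is active is decided by comparing $p$ with $n^2$, $n(n+1)$, and $(n+1)^2$, producing the candidate values $2/n$ (from $a=b=1/n$), $(2n+1)/p$ for $p\le n(n+1)$ and $1/n+1/(n+1)=(2n+1)/(n(n+1))$ for $p\ge n(n+1)$ (from the mixed pair), and $(2n+2)/p$ for $p\le(n+1)^2$ (from $a=b=(n+1)/p$). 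Taking the pointwise minimum and locating the crossovers $(2n+1)/p=2/n$ at $p=n^2+n/2$ and $(2n+2)/p=(2n+1)/(n(n+1))$ at $p=2n(n+1)^2/(2n+1)=n^2+3n/2+O(1)$ yields exactly the four intervals with breakpoints $n^2+\nf$, $n^2+2\nf$, and $n^2+n+\nf$.

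I expect the main obstacle to be this last step: confining the minimizer to $\{n,n+1\}^2$ and then pinning the breakpoints, where the floors $\nf$ encode the parity of $n$ — the continuous crossovers sit at $n^2+n/2$ and $n^2+3n/2$, and integrality rounds them to $\nf$, with ties at the endpoints $p=n^2$, $p=n(n+1)$, and $p=n(n+2)$ making the closed/half-open interval conventions merely cosmetic. Since everything reduces to evaluating the explicit function $f$ at finitely many integer points, the computer search of Remark \ref{rem:CompSearch} both suggests the candidate pairs and independently confirms the closed form against the piecewise formula.
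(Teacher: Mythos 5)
Your reduction to the finite integer program is essentially the paper's own: the paper also restricts to the two endpoint worst-case configurations, parametrizes by the intermediate pebble counts, and arrives at the identical objective $\max(s/p,1/t)+\max(1/s,t/p)$ (Theorem \ref{p2-theorem}'s proof, with $a,b$ in place of your $s,t$). You are in fact more careful than the paper in justifying the worst-case configurations and in checking that the middle vertex imposes no extra constraint, which is a genuine (if small) improvement. The gap is in your final step, which is where all the real work lies. The claim that the single-variable bound confines minimizers to $s,t\in\{n,n+1\}$ --- specifically, that $s/p+1/s\ge 2/n$ whenever $s\le n-1$ or $s\ge n+2$ --- is false. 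Take $p=15$, so $n=3$: then $s=5$ gives $s/p+1/s=1/3+1/5=8/15<2/3$, and $s=2$ gives $2/15+1/2=19/30<2/3$. Worse, $(s,t)=(5,3)$ is an actual minimizer, since $f(5,3)=1/3+1/5=8/15=w_{P_2}(15)$, so minimizers are simply not confined to $\{n,n+1\}^2$. Nor can the bound $f(s,t)\ge s/p+1/s$ certify even the weaker (true) statement that the minimum \emph{value} is attained on $\{n,n+1\}^2$: for $p=116$ ($n=10$) one has $9/116+1/9=197/1044<198/1044=22/116=w_{P_2}(116)$, so this bound cannot eliminate $s=9$; indeed $(s,t)=(9,13)$ is another minimizer there.

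What is needed to close the gap is the joint analysis that your sketch skips. Since $s/p\ge 1/t$ if and only if $st\ge p$, if and only if $t/p\ge 1/s$, the objective equals $(s+t)/p$ when $st\ge p$ and $1/s+1/t$ when $st\le p$, so the problem splits into two constrained integer programs: minimize $(s+t)/p$ subject to $st\ge p$, and minimize $1/s+1/t$ subject to $st\le p$. Solving these, with substitutions of the form $c=n-j$, $d=n+k$ and a case analysis over where $p$ sits in $[n^2,n^2+2n]$, is precisely the content of the paper's Lemmas \ref{function1} and \ref{function2}, and it constitutes the bulk of the paper's proof; pairs such as $(5,3)$ or $(9,13)$ are ruled out (or matched) only by the two-variable constraint $st\ge p$, never by single-variable domination. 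Your breakpoint computation at the end is contingent on the false confinement claim, so as written the proposal does not go through; it would need to be replaced by (something equivalent to) the two lemmas' case analysis.
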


\begin{proof}
Say the vertices of $P_2$ are $x$, $y$, and $z$, with leaves $x$ and $z$.  It suffices to provide a weight distribution in the two worst-case pebbling configurations: $C_1$ with $p$ pebbles on $x$ and target $z$, and $C_2$ with $p$ pebbles on $z$ and target $x$.  Now consider any weight distribution for which $P_2$ is solvable.  Such a weight distribution determines two integers $a$ and $b$, obtained from the solutions of $C_1$ and $C_2$ as follows.  From $C_1$, move all $p$ pebbles across $xy$ to obtain $a$ pebbles on $y$, and then move the $a$ pebbles across $yz$ to obtain at least $1$ pebble on $z$; from $C_2$, move all $p$ pebbles across $yz$ to obtain $b$ pebbles on $y$, and then move $b$ pebbles across $xy$ to obtain at least $1$ pebble on $x$ (see Figure \ref{fig:abpath}).

\begin{figure} [!htbp]
\begin{center}
 \includegraphics[width=6cm]{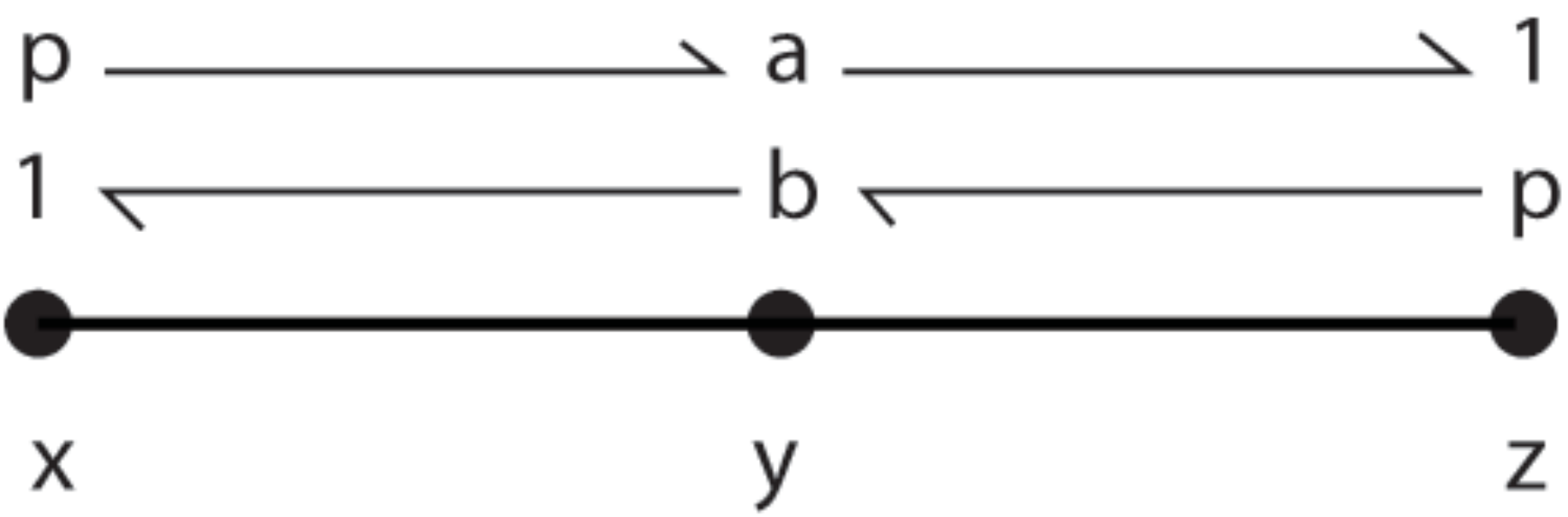}
  \end{center}
\caption{Two pebbling configuration solutions with intermediate steps.}
 \label{fig:abpath}
\end{figure}

\noindent The strategy for the proof is to find, for a given $a$ and $b$ between $1$ and $p$, the minimum total weight $w_{a,b}$ of such a weight distribution and then minimize $w_{a,b}$ over all such choices of $a$ and $b$.
\bigskip

We begin by computing $w_{a,b}$.  For a fixed choice of $a$ and $b$,
to ensure that
the rightward path in Figure~\ref{fig:abpath} is achieved as cheaply as possible, we weight the
first edge with $a/p$ and the second with $1/a$.  For
the leftward path, the minimizing weights are similarly $b/p$
and $1/b$.  Combined, we find that the smallest total weight for a given $a$ and $b$ is
$$
w_{a,b}=\max\left(\frac{a}{p},\frac{1}{b}\right)+\max\left(\frac{b}{p},\frac{1}{a}\right),
$$ and hence $w_{P_2}(p)$ is the minimum value this weight can take over
all possible choices $0\leq a,b\leq p$.  Note that
$$
\frac{a}{p}\geq \frac{1}{b}\text{ if and only if } \frac{b}{p}\geq\frac{1}{a},
$$ giving
$$w_{a,b}=\max\left(\frac{a+b}{p},\frac{1}{a}+\frac{1}{b}\right)=\max\left(\frac{a+b}{p},\frac{a+b}{ab}\right)=\frac{a+b}{\min(p,ab)}.$$
Finally, we conclude
\begin{align}
w_{P_2}(p)=\min_{0\leq a,b\leq p}\left(\frac{a+b}{\min(p,ab)}\right) = \min \left( \min_{0 \leq a,b \leq p, \, ab \leq p} \frac{a+b}{p} , \, \min_{0 \leq a,b \leq p, \, ab \geq p} \frac{a+b}{ab} \right).
\end{align}
It remains to compute this minimum explicitly for a given value of $p$.  We do
this by computing the minimums of the two functions on the right, subject
to their given constraints, and then taking the smaller of the two.  Lemmas~\ref{function1} and \ref{function2} below are precisely these calculations, and the result follows.
\end{proof}

\begin{lemma} \label{function1}
Let $f(a,b)=a+b$, and for a positive integer $p$, let $f_p$ denote the
minimum value of $f(a,b)/p$ with integer inputs $1\leq a\leq b\leq p$
satisfying $ab\geq p$.  Let $n=\lf\sqrt{p}\rf$.
Then
$$
pf_p=\begin{cases}
2n&\text{ if }p=n^2\\
2n+1&\text{ if }n^2<p\leq n^2+n\\
2n+2&\text{if }n^2+n<p\leq n^2+2n.
\end{cases}
$$
\end{lemma}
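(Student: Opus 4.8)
The plan is to recognize that, since $f_p=\min\{(a+b)/p\}$, the quantity $pf_p$ is simply $\min\{a+b : 1\le a\le b\le p,\ ab\ge p,\ a,b\in\Z\}$, a discrete optimization problem. I would attack it through the familiar sum--product duality: rather than minimize the sum subject to a product constraint directly, I would first determine, for each fixed value of the sum $s=a+b$, the largest product achievable, and then locate the smallest $s$ whose largest achievable product still meets the constraint $ab\ge p$.

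The key step is the balancing observation: for a fixed sum $s$, the product $ab=a(s-a)$ is a downward parabola in $a$ maximized at $a=s/2$, so among integer pairs with $a+b=s$ and $a\le b$ the product is largest when the factors are as nearly equal as possible, namely $a=\lf s/2\rf$, $b=\lc s/2\rc$, giving maximal product $\lf s/2\rf\lc s/2\rc$. Concretely, if $a<b-1$ then replacing $(a,b)$ by $(a+1,b-1)$ strictly increases the product while preserving the sum, so iterating pushes the pair toward balance. Consequently $pf_p$ equals the least integer $s$ for which $\lf s/2\rf\lc s/2\rc\ge p$: any feasible pair forces its sum $s$ to satisfy this inequality, and conversely the balanced pair of that sum is feasible and achieves it.

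It then remains to run the case analysis against $n=\lf\sqrt p\rf$, which by definition satisfies $n^2\le p\le n^2+2n$. Evaluating the maximal product at the three candidate sums gives $\lf 2n/2\rf\lc 2n/2\rc=n^2$ at $s=2n$, then $n(n+1)=n^2+n$ at $s=2n+1$, and $(n+1)^2=n^2+2n+1$ at $s=2n+2$. Comparing each with $p$ shows that $s=2n$ suffices exactly when $p=n^2$; that $s=2n+1$ is the least feasible sum exactly when $n^2<p\le n^2+n$; and that $s=2n+2$ is the least feasible sum exactly when $n^2+n<p\le n^2+2n$ (the product $n^2+2n+1$ always dominating $p$ in this last range). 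These three outcomes are precisely the stated values of $pf_p$. The only point needing a brief check is that the side constraint $b\le p$ never binds: the optimal $b=\lc s/2\rc\le n+1$, which is at most $p$ throughout the range $n^2\le p$, so it may be dropped. I do not expect a genuine obstacle here; the one subtlety is pinning down the boundary cases in the case analysis and confirming that the \emph{least} qualifying $s$, rather than merely some qualifying $s$, is selected.
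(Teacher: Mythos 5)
Your proposal is correct, but it takes a genuinely different route from the paper. The paper proves the lemma by brute-force case analysis: for each of the three ranges of $p$ it exhibits the optimal pair ($(n,n)$, $(n,n+1)$, or $(n+1,n+1)$), assumes a better pair $(c,d)$ exists, and splits into three subcases (both entries small, both large, one of each), handling the mixed subcase by writing $c=n-j$, $d=n+k$ and checking signs to show $cd<p$ --- nine subcases in all. You instead isolate a single structural fact, the balancing/exchange lemma: for fixed sum $s$, the maximal product over integer pairs is $\lf s/2\rf\lc s/2\rc$, achieved by the balanced pair, since $(a+1)(b-1)>ab$ whenever $a<b-1$. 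This converts the problem into computing the least $s$ with $\lf s/2\rf\lc s/2\rc\ge p$ (using monotonicity of $s\mapsto\lf s/2\rf\lc s/2\rc$), after which the three cases follow from evaluating that quantity at $s=2n,2n+1,2n+2$, namely $n^2$, $n^2+n$, $n^2+2n+1$. What your approach buys is brevity and transparency: the paper's mixed subcases 2c and 3c are really this same balancing fact proved ad hoc twice, and your version replaces all the sign bookkeeping with one exchange argument. One small imprecision: your claim that the optimal $b=\lc s/2\rc\le n+1$ is at most $p$ ``throughout the range $n^2\le p$'' fails literally at $n=1$, $p=1$ (where $n+1=2>p$), but there the relevant balanced pair is $(n,n)=(1,1)$, so the constraint still does not bind; in general one checks $b=n\le n^2=p$ in Case 1 and $b=n+1\le n^2+1\le p$ in the other two cases. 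This is a trivial patch, not a gap.
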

\begin{proof}
We verify the formula for each of the three cases.  Note that since $n=\lf\sqrt{p}\rf$ and $p$ is an integer, $n^2 \leq p \leq n^2+2n$, so these three cases include all possible values of $p$.
\bigskip

\noindent \textbf{Case 1. $p=n^2$.}  In this case $f$ achieves its minimum at $a=b=n=\lf\sqrt{p}\rf$, which agrees with the absolute minimum of the continuous function $f(a,b)=a+b$.
\bigskip

\noindent \textbf{Case 2. $n^2<p\leq n^2+n$.}  Note that if $a=n$ and $b=n+1$ then $f(a,b)=2n+1$.  These values of $a$ and $b$ satisfy $ab \geq p$ since $ab=n(n+1)=n^2+n\geq p$ by assumption.  Now assume by way of contradiction that there exist integers $c$ and $d$ with $1 \leq c \leq d \leq p$, $cd \geq p$, and $f(c,d)< 2n+1$.  Since $f(c,d)=c+d$ is an integer, we have $f(c,d)\leq 2n$.

\textbf{Subcase 2a. $c,d\leq n$.}  Then $cd\leq
\llf\sqrt{p}\rrf^2<p$, contradicting $cd\geq p$.

\textbf{Subcase 2b. $c,d\geq n+1$.} Then $c+d>2n+1$, contradicting $f(c,d)<2n+1$.

\textbf{Subcase 2c. $c\leq n$ and $d \geq n+1$.}  Write $c=n-j$ and
$d=n+k$ with $j \geq 0$ and $k\geq 1$.  Then since $c+d\leq 2n$ we have $k-j\leq 0$.  Therefore
$$cd=n^2+\underbrace{(k-j)}_{\leq 0}n+\underbrace{(-jk)}_{\leq 0}\leq n^2<p,$$
which contradicts $cd>p$.
\bigskip

\noindent \textbf{Case 3. $n^2+n<p\leq n^2+2n$.} Note that if $a=b=n+1$ then $f(a,b)=2n+2$.  These values of $a$ and $b$ satisfy $ab \geq p$ since  $ab=n^2+2n+1>n^2+2n\geq p$ by assumption.  Now assume by way of contradiction that there exist integers $c$ and $d$ with $1 \leq c \leq d \leq p$, $cd \geq p$, and $f(c,d)< 2n+2$.

\textbf{Subcase 3a. $c\leq n$ and $d\leq n+1$.}  Then $cd\leq n(n+1)=n^2+n<p$, contradicting $cd \geq p$.

\textbf{Subcase 3b. $c,d\geq n+1$.} Then $f(c,d)\geq 2n+2$, contradicting $f(c,d)<2n+2$.

\textbf{Subcase 3c. $c\leq n$ and $d>n+1$.} Write
$c=n+1-j$ and $d=n+1+k$ with $j,k\geq 1$.  Since $c+d< 2n+2$
we have $k-j<0$.  Therefore $$cd=n^2+\underbrace{(2+k-j)}_{\leq 1}n+\underbrace{(k-j)}_{<0}+\underbrace{(-jk)}_{<0}\leq n^2+n<p,$$
which contradicts $cd\geq p$.
\end{proof}

\begin{lemma} \label{function2}
Let $g(a,b)=1/a+1/b$, and for a positive integer $p$, let $g_p$ denote the
minimum value of $g(a,b)$ with integer inputs $1\leq a\leq b\leq p$
satisfying $ab\leq p$.  Let $n=\lf\sqrt{p}\rf$.
Then
$$
g_p
\begin{cases}
=\frac{2}{n}&\text{ if }p\in[n^2,n^2+\nf]\\[.5em]
\geq\frac{2n+1}{p}&\text{ if }p\in(n^2+\llf\frac{n}{2}\rrf,n^2+2\llf\frac{n}{2}\rrf]\\[.5em]
=\frac{2n+1}{n(n+1)}&\text{ if }p\in(n^2+2\llf\frac{n}{2}\rrf,n^2+n+\llf\frac{n}{2}\rrf\big]\\[.5em]
\geq\frac{2n+2}{p}&\text{ if }p\in (n^2+n+\llf\frac{n}{2}\rrf,n^2+2n].
\end{cases}
$$
\end{lemma}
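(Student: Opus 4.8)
The plan is to bound the quantity $1/a+1/b=(a+b)/(ab)$ directly, for every feasible pair, by splitting on the value of the sum $s=a+b$. Two elementary inequalities drive everything. First, feasibility $ab\le p$ gives
$$\frac1a+\frac1b=\frac{a+b}{ab}\ge\frac{a+b}{p},$$
which is a strong bound when $s$ is large. Second, AM--GM in the form $ab\le s^2/4$ gives
$$\frac1a+\frac1b=\frac{s}{ab}\ge\frac4s,$$
which is strong when $s$ is small. I would prove the four cases by choosing, in each, a threshold for $s$ and applying the feasibility bound above the threshold and the AM--GM bound below it, with at most one \emph{critical} value of $s$ treated by hand.

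Concretely, for each equality case I first record the achieving pair, which furnishes the upper bound: $(n,n)$ giving $2/n$ in the first case (feasible since $n^2\le p$), and $(n,n+1)$ giving $(2n+1)/(n(n+1))$ in the third case (feasible because the hypotheses force $t:=p-n^2\ge n$, so $n(n+1)\le p$). For the matching lower bound in the first case, pairs with $s\le 2n$ satisfy $1/a+1/b\ge 4/s\ge 2/n$, while pairs with $s\ge 2n+1$ satisfy $1/a+1/b\ge s/p\ge (2n+1)/p$, and the hypothesis $t\le\lfloor n/2\rfloor$ is exactly what makes $(2n+1)/p\ge 2/n$. The third case is identical in spirit but splits at $s=2n+1$: below it the AM--GM bound beats $(2n+1)/(n(n+1))$, at $s=2n+1$ the product of any such pair is at most $n(n+1)$ so that $(a+b)/(ab)\ge (2n+1)/(n(n+1))$ directly, and above it I use $1/a+1/b\ge (2n+2)/p$, where the hypothesis $t\le n+\lfloor n/2\rfloor$ guarantees $(2n+2)/p\ge (2n+1)/(n(n+1))$.

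The two remaining cases require only the lower bound, and the same dichotomy applies. In the second case I split at $s=2n$: AM--GM gives $\ge 2/n\ge (2n+1)/p$ using $t>\lfloor n/2\rfloor$, while feasibility gives $\ge (2n+1)/p$ for $s\ge 2n+1$. In the fourth case I split at $s=2n+1$: AM--GM gives $\ge 4/(2n+1)\ge (2n+2)/p$ using $t>n+\lfloor n/2\rfloor$, while feasibility gives $\ge (2n+2)/p$ for $s\ge 2n+2$.

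I expect the main obstacle to be purely bookkeeping: verifying the threshold comparisons --- most delicately $p\le 2n(n+1)^2/(2n+1)$ in the third case and $p\ge (2n+1)(n+1)/2$ in the fourth --- forces me to expand $\lfloor n/2\rfloor$ according to the parity of $n$ and check that the integer value of $p$ lands on the correct side of a half-integer threshold. These are precisely the spots where the $\lfloor n/2\rfloor$ appearing in the case boundaries is doing real work, so I would isolate each as a short parity computation rather than risk an off-by-one error.
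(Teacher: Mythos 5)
Your proposal is correct, and it takes a genuinely different (and leaner) route than the paper. The paper fixes the candidate minimizer, assumes for contradiction a better feasible pair $(c,d)$, and splits into subcases according to the positions of $c$ and $d$ relative to $n$ and $n+1$; the straddling subcase is handled by writing $c=n-j$, $d=n+k$, extracting an inequality between $j$ and $k$ from the assumed bound, and then estimating the product $cd$ (or the value $g(c,d)$) to reach a contradiction -- so each of the four cases carries its own round of floor-and-product algebra. You instead bound every feasible pair at once using two global inequalities, the feasibility bound $1/a+1/b=(a+b)/(ab)\geq (a+b)/p$ and the AM--HM bound $1/a+1/b\geq 4/(a+b)$, and split on the sum $s=a+b$ at a single threshold per case (plus the one hand-checked value $s=2n+1$ in Case 3, where the integrality fact $a+b=2n+1\Rightarrow ab\leq n(n+1)$ does the work). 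I verified your threshold comparisons, including the two you flag as delicate: $(2n+1)\left(n^2+n+\left\lfloor n/2\right\rfloor\right)\leq 2n(n+1)^2$ in Case 3 and $n^2+n+\left\lfloor n/2\right\rfloor+1\geq (2n+1)(n+1)/2$ in Case 4, both of which hold by the parity computations you describe, and the remaining cases (including the feasibility of $(n,n+1)$ in Case 3 via $t\geq 2\left\lfloor n/2\right\rfloor+1\geq n$) all check out. What your approach buys is brevity and robustness: the $j,k$ parametrization disappears entirely, and all floor/parity bookkeeping is concentrated in a handful of scalar comparisons rather than being interleaved with product estimates inside each subcase. What the paper's approach buys is that it is purely term-by-term elementary -- no need to invoke the AM--HM trick -- and its contradiction structure pinpoints exactly which near-optimal integer pairs are being excluded, which is perhaps more transparent for a reader tracing where each case boundary comes from.
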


\begin{proof}
We verify the formula for each of the four cases.  Note again that since $n=\lf\sqrt{p}\rf$ and $p$ is an integer, $n^2 \leq p \leq n^2+2n$, so these four cases include all possible values of $p$.
\bigskip

\noindent \textbf{Case 1. $p\in[n^2,n^2+\nf]$.} Note that if $a=b=n$ then $g(a,b)=2/n$.  These values of $a$ and $b$ satisfy $ab \leq p$ since $ab=n^2\leq p$. Now assume by way of contradiction that there exist integers $c$ and $d$ with $1 \leq c \leq d \leq p$, $cd \leq p$, and $g(c,d)< 2/n$.

\textbf{Subcase 1a. $c,d\geq n$.} Since $c$ and $d$ are not both $n$, at least one of them must be at least
$n+1$, in which case $$cd\geq n(n+1)=n^2+n>n^2+\left\lfloor\frac{n}{2}\right\rfloor\geq
p,$$ contradicting $cd \leq p$.

\textbf{Subcase 1b. $c,d\leq n$.} Since $c$ and $d$ are not both $n$, $g(c,d)\geq 2/n$.

\textbf{Subcase 1c. $c<n$ and $d>n$.} We write $c=n-j$ and $d=n+k$ with $j,k>0$.  By
assumption,
$$g(c,d)=\frac{1}{n-j}+\frac{1}{n+k}<\frac{2}{n}\quad\quad\text{and }\quad\quad (n-j)(n+k)\leq p.
$$ Clearing denominators in the first inequality and then substituting
in the second gives
$$
n(n+k)+n(n-j)<2(n-j)(n+k)\leq 2p\leq 2\left(n^2+\left\lfloor\frac{n}{2}\right\rfloor\right).
$$
\noindent Cancelling $2n^2$ from the first and and last expressions gives
$$nk-nj<2\left\lfloor\frac{n}{2}\right\rfloor\leq n,$$ so $k-j<1$ and $k\leq j$.  If $k=j$ then $$
g(c,d)=\frac{1}{c}+\frac{1}{d}=\frac{1}{n-j}+\frac{1}{n+j}=\frac{2n}{n^2-j^2}=\frac{2}{n-\frac{j^2}{n}}>\frac{2}{n},
$$ contradicting $g(c,d)<2/n$.  If $k<j$, then $g(c,d)>1/(n-j)+1/(n+j)>\frac{2}{n}$, still contradicting $g(c,d)<2/n$.
\bigskip

\noindent \textbf{Case 2. $p\in(n^2+\llf\frac{n}{2}\rrf,n^2+2\llf\frac{n}{2}\rrf]$.} Assume by way of contradiction that there exist integers $c$ and $d$ with $1 \leq c \leq d \leq p$, $cd \leq p$, and $g(c,d)<\frac{2n+1}{p}$.

\textbf{Subcase 2a. $c,d\leq n$.} Then
$$
\frac{1}{c}+\frac{1}{d}\geq \frac{2}{n}\geq \frac{2(n^2+\llf\frac{n}{2}\rrf+1)}{np}\geq \frac{2n^2+n}{np}=\frac{2n+1}{p},
$$ contradicting $g(c,d)<(2n+1)/p$.

\textbf{Subcase 2b. $c,d\geq n+1$.} Then
$$
cd\geq (n+1)^2=n^2+2n+1> n^2+2\llf\frac{n}{2}\rrf\geq p,
$$ contradicting $cd \leq p$.

\textbf{Subcase 2c. $c\leq n$ and $d> n$.} Thus we can write $c=n-j$, $d=n+k$ with $j\geq 0$, $k\geq 1$. Since $1/c+1/d< (2n+1)/p$, we have $p(c+d)<(2n+1)cd\leq p(2n+1)$, so $c+d\leq 2n$, which means $k\leq j$.  So, in particular, $j \geq 1$.  If $j=k$, then we have
$$
\frac{1}{c}+\frac{1}{d}=\frac{1}{n-j}+\frac{1}{n+j}=\frac{2n}{n^2-j^2}\geq\frac{2n+1}{p},
$$
the last line resulting from cross-dividing the following inequality:
$$
2np>2n\left(n^2+\llf\frac{n}{2}\rrf \right)\geq2n\left(n^2+\left(\frac{n}{2}-\frac{1}{2}\right)\right)=2n^3+n^2-n>2n^3+n^2-j^2(2n+1)=(2n+1)(n^2-j^2).
$$ If $k<j$, then $g(c,d)>1/(n-j)+1/(n+j)\geq (2n+1)/p$, still contradicting $g(c,d)<(2n+1)/p$.
\bigskip

\noindent \textbf{Case 3. $p\in(n^2+2\llf n/2 \rrf,n^2+n+\llf n/2 \rrf ]$.} Note that if $a=n$ and $b=n+1$ then $g(a,b)=1/n+1/(n+1)$.  These values of $a$ and $b$ satisfy $ab \leq p$ since $ab=n(n+1)\leq n^2+2\llf\frac{n}{2}\rrf+1\leq p$.  Now assume by way of contradiction that there exist integers $c$ and $d$ with $1 \leq c \leq d \leq p$, $cd \leq p$, and $g(c,d)< 1/n+1/(n+1)$.

\textbf{Subcase 3a. $c\leq n$ and $d\leq n+1$.} Then $g(c,d)\geq\frac{1}{n}+\frac{1}{n+1}$.

\textbf{Subcase 3b. $c\geq n$ and $d\geq n+2$, or $c, d \geq n+1$.} Then
$cd\geq n^2+2n> n^2+n+\left\lfloor n/2 \right\rfloor\geq p$, contradicting $cd \leq p$.

\textbf{Subcase 3c. $c<n$ and $d>n+1$.} We write $c=n-j$ and $d=n+k$ with $j>0$, $k>1$.  Clearing the denominators of
\begin{align*}
\frac{1}{n-j}+\frac{1}{n+k}<\,&\frac{1}{n}+\frac{1}{n+1},
\end{align*}
and distributing the left-hand side now gives
\begin{align*}
2n^3+(k-j+2)n^2+(k-j)n&<(n+1)(n-j)(n+k)+n(n-j)(n+k)\\
&\leq(n+1)p+np\\
&\leq (2n+1)\left(n^2+n+\llf\frac{n}{2}\rrf\right)\\
  &=2n^3+3n^2+2n\llf\frac{n}{2}\rrf+n+\llf\frac{n}{2}\rrf\\
&\leq 2n^3+4n^2+2n.
\end{align*}
Subtracting $2n^3+2n^2$ from both sides and dividing by $n^2+n$ gives
$k-j<2$, and so $k\leq j+1$. If $k=j+1$ then
$$
g(c,d)=\frac{1}{n-j}+\frac{1}{n+(j+1)}=\frac{2n+1}{n^2-j^2+n-j}>\frac{2n+1}{n^2+n}=\frac{1}{n}+\frac{1}{n+1},
$$ contradicting our assumption that $g(c,d)<1/n+1/(n+1)$. If $k<j+1$, then $g(c,d)>1/(n-j)+1/(n+(j+1))\geq 1/n + 1/(n+1)$, still contradicting $g(c,d)<1/n+1/(n+1)$.
\bigskip

\noindent \textbf{Case 4. $p\in (n^2+n+\llf\frac{n}{2}\rrf,n^2+2n]$.}  Assume by way of contradiction that there exist integers $c$ and $d$ with $1 \leq c \leq d \leq p$, $cd \leq p$, and $g(c,d)<\frac{2n+2}{p}$.

\textbf{Subcase 4a. $c\leq n$ and $d\leq n+1$.} Then
$$
\frac{1}{n}+\frac{1}{n+1}=\frac{(2n+1)(2n+2)}{(n^2+n)(2n+2)}
>\frac{(2n+2)(2n+1)}{(n^2+\frac{3n}{2}+\frac{1}{2})(2n+1)}\geq\frac{2n+2}{n^2+n+\llf\frac{n}{2}\rrf+1}\geq\frac{2n+2}{p},
$$
contradicting $g(c,d)<(2n+2)/p$.

\textbf{Subcase 4b. $c,d\geq n+1$.} Then
$$
cd\geq n^2+2n+1>n^2+2n\geq p,
$$ contradicting $cd \leq p$.

\textbf{Subcase 4c. $c\leq n$ and $d> n+1$.}  Write $c=n-j$ and $d=n+k$, with $j\geq0$
and $k> 1$.  Since $g(c,d)=1/c+1/d< (2n+2)/p$, it follows that
$p(d+c)<(2n+2)cd\leq (2n+2)p$, so $d+c=2n-j+k<2n+2$, giving $k\leq
j+1$.  If $k=j+1\geq 2$, then the bound
$$
(2n+1)p\geq (2n+1)\left(n^2+n+\llf\frac{n}{2}\rrf +1 \right)\geq 2n^3+4n^2+ \frac{5}{2} n+\frac{1}{2}
$$
$$\geq 2n^3+4n^2+(2-2j-2j^2)n-2(j+j^2)=(2n+2)(n-j)(n+j+1)
$$
gives
$$
\frac{1}{c}+\frac{1}{d}=\frac{1}{n-j}+\frac{1}{n+j+1}=\frac{2n+1}{(n-j)(n+j+1)}\geq\frac{2n+2}{p},
$$ contradicting our assumption that $g(c,d)< (2n+2)/p$.  Finally, if $k<j+1$, then $g(c,d)>1/(n-j)+1/(n+(j+1))\geq (2n+2)/p$, still contradicting $g(c,d)<(2n+2)/p$.
\end{proof}

Table \ref{table}, at the end of the paper, gives a list of values for the pebbling weight function for paths of length between 2 and 7 and for varying values of $p$.  The values were obtained through a computer search.   

\section{Graphs Requiring Specific Edge Weights}

Recall that the weighted pebbling number of a graph $G$ is the smallest weighted pebbling number of all weighted graphs $G_W$ with $|W|=E(G)/2$.    Although this minimum is taken over all possible weight distributions on $G$, in previous examples the smallest number of pebbles was obtained with edge weights of $1/2$, $1$, or $0$.  We ask whether this is always the case.   In other words, given a rational number $a$ with $0 \leq a \leq 1$, is there a graph which requires an edge weight $a$ to achieve its weighted pebbling number?  Note by Lemma \ref{rational}, edge weights may always be taken to be rational.  
In  Theorem~\ref{thm:1/3-required}, we prove that the tree $T$ shown in Figure~\ref{1/3-tree} requires an edge weight of $1/3$.  For all rational numbers in $(0,1)$ other than 1/2 and 1/3, this question remains open.

\begin{figure} [!htbp]
\begin{center}
 \includegraphics[width=6cm]{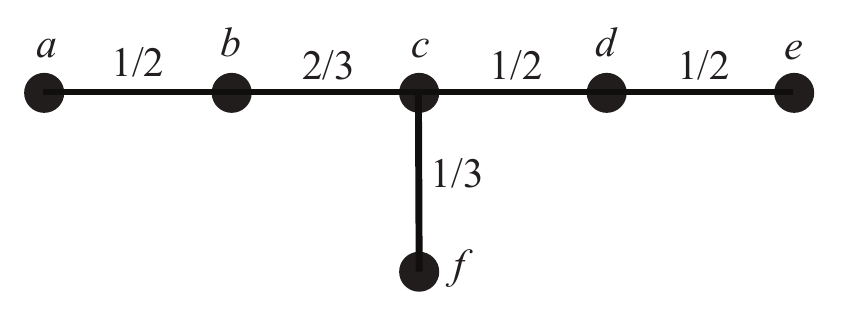}
  \end{center} \caption{A weight distribution $\widehat{W}$ on $T$ with $wp(T_{\widehat{W}})=15$.} \label{1/3-tree}
\end{figure}

We begin by establishing the weighted pebbling number of $T$.  Since $T$ is a tree, the weighted pebbling number of each weight distribution on $T$ is given by Theorem~\ref{BigTheorem}.  However, the formula in Theorem~\ref{BigTheorem} depends on a maximum path partition of $T$, which in turn depends on its weight distribution.

\begin{proposition}  Let $T$ be the tree in Figure~\ref{1/3-tree}.  Then
$wp(T)=15$.
\label{thm:1/3-tree}
\end{proposition}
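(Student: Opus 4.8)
The plan is to prove the two inequalities $wp(T)\le 15$ and $wp(T)\ge 15$ separately. For the upper bound I would simply exhibit the weight distribution $\widehat{W}$ drawn in Figure~\ref{1/3-tree}, first checking that its total weight equals $|E(T)|/2$ so that it is admissible in the definition of $wp(T)$. Since $T_{\widehat{W}}$ is a weighted tree, Theorem~\ref{BigTheorem} computes $wp(T_{\widehat{W}},t)$ for each target $t$ from a maximum path partition built with respect to $\widehat{W}$ and $t$; taking the maximum over all targets $t$ then gives $wp(T_{\widehat{W}})$. Carrying out this (finite) computation for each choice of target should yield $wp(T_{\widehat{W}})=15$, and hence $wp(T)\le wp(T_{\widehat{W}})=15$.

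The real content is the reverse inequality: no admissible distribution does better. I would argue by contradiction, assuming there is a distribution $W$ with $|W|=|E(T)|/2$ and $wp(T_W)\le 14$. The key translation is that an edge $e$ has requirement $r_e=\lceil 1/w_e\rceil$, so an upper bound $r_e\le R$ forces the weight bound $w_e\ge 1/R$, and more generally, bounding the requirement $r(p_i)$ of a directed path forces lower bounds on the total weight along that path via the nested-ceiling formula of Section~3. By Theorem~\ref{BigTheorem}, the hypothesis $wp(T_W,t)\le 14$ for a target $t$ reads $\sum_i\bigl(r(p_i)-1\bigr)\le 13$ for the maximum path partition associated to $t$, so only a limited requirement budget is available for that target.

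I would then select a small family of targets that collectively stress every edge of $T$, and for each such target convert its budget inequality $\sum_i(r(p_i)-1)\le 13$ into lower bounds on the edge weights lying on the relevant paths. Because the requirement of a multi-edge directed path is a nested ceiling rather than a product, the cheapest way to realize a prescribed path requirement is not to split the weight evenly; the minimum total weight realizing a given one-directional path requirement is exactly the quantity produced by the computer search of Remark~\ref{rem:CompSearch} (and tabulated for short paths via Theorem~\ref{p2-theorem} and Table~\ref{table}). Invoking those minimal values, I would bound the weight consumed on each path from below, sum these bounds over all edges, and aim to obtain a total strictly greater than $|E(T)|/2$, contradicting $|W|=|E(T)|/2$.

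The main obstacle is the coupling between $W$ and the maximum path partition: the partition Theorem~\ref{BigTheorem} requires is itself determined by which directed paths have maximal requirement, which depends on $W$. I therefore cannot fix one partition in advance, but must show that \emph{whatever} partition arises, the combination of the per-target budget $\sum_i(r(p_i)-1)\le 13$ with the global constraint $|W|=|E(T)|/2$ is infeasible. Managing this case analysis uniformly over all admissible $W$ and all targets — rather than only for the single distribution $\widehat{W}$ — is the delicate part, and I expect it to be precisely where the asymmetry of path requirements (the same phenomenon noted after Lemma~\ref{rational}, that a weight of $1/3$ paired with $1/2$ outperforms two equal weights) forces the extremal distribution to place an edge weight of exactly $1/3$.
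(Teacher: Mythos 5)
Your upper-bound half is fine and matches the paper: exhibit $\widehat{W}$ from Figure~\ref{1/3-tree}, apply Theorem~\ref{BigTheorem} over all targets, conclude $wp(T)\le 15$. Your translation of the lower bound is also valid in principle: by Theorem~\ref{BigTheorem}, $wp(T_W)\le 14$ is equivalent to the budget $\sum_i\bigl(r(p_i)-1\bigr)\le 13$ holding for every target, so infeasibility of these budgets together with $|W|=5/2$ is exactly what must be shown. But at that point your proposal stops being a proof: the entire content of the proposition is the case analysis you defer (``managing this case analysis uniformly over all admissible $W$ and all targets \dots is the delicate part''). You never choose the targets, never write down a single inequality, and never produce the contradiction. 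The paper's proof, by contrast, is organized around one concrete pivot: the requirement $r(P)$ of the central length-$4$ path $P=abcde$. Using Table~\ref{table} it shows $r(P)\ge 8$, then sharpens to $r(P)\ge 9$ because $w(cf)>0$, and $r(P)\le 14$ from $14$-solvability; it then runs six cases $r(P)=9,\dots,14$, in each case bounding $w(cf)$ \emph{above} by $5/2-w_{P_4}(r(P))$ and exhibiting an explicit unsolvable configuration of at most $14$ pebbles split between $a$ and $f$ (e.g.\ $9$ on $a$ and $5$ on $f$ with target $e$ when $r(P)=10$).

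Beyond incompleteness, your specific plan --- derive per-edge weight lower bounds from several targets and ``sum these bounds over all edges'' to exceed $5/2$ --- is likely to fail as stated. The budget constraints from different targets involve overlapping sets of edges, and the minimum total weight realizing a given directed-path requirement (the nested-ceiling quantity) is not additive across edges, so it does not decompose into per-edge bounds without losing exactly the slack you need. The sharpest case in the paper (Case~4, $r(P)=12$, where $w(cf)=1/3$ and the weight on $P$ is exactly $13/6=w_{P_4}(12)$) illustrates this: no coarse weight count gives a contradiction there; the paper must split $P$ into $Q=abc$ and $Q'=cde$, compare $w(Q)$ with $w(Q')$, and argue about $3$-solvability and $4$-solvability of these length-$2$ halves in specific directions, using further computer-searched thresholds such as $7/6$. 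Any correct proof has to engage with configurations that place pebbles on both $a$ (or $e$) and $f$ simultaneously and with the direction-dependence of requirements; your outline contains the right ingredients (Theorem~\ref{BigTheorem}, Remark~\ref{rem:CompSearch}, Table~\ref{table}) but not the argument that assembles them.
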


\begin{proof}
Figure~\ref{1/3-tree} gives a weight distribution $\widehat{W}$ on $T$ for which $wp(T_{\widehat {W}})=15$ by Theorem~\ref{BigTheorem}, and thus $wp(T) \leq 15$.  Now suppose by way of contradiction that there exists a weight distribution $W$ on $T$ for which $wp(T_W)\leq 14$ and $|W|=5/2$.  So all distributions of 14 pebbles on $T_W$ can be solved to any target.  We consider the requirement of the path $P$ from $a$ to $e$.  Since by Table~\ref{table}, any path of length $4$ with requirement less than $8$ has total weight more than $5/2$, we know $r(P)\geq 8$.  Also, since $w(cf)>0$ (else a configuration with 14 pebbles on $f$ and target $c$ is unsolvable), the total weight of $P$ must be less than $5/2$, so, in fact, $r(P)\geq 9$.  On the other hand, since $wp(T_W)\leq 14$ by assumption, $r(P)\leq 14$.  Thus, there are 6 cases to consider.  In what follows, we use the values from Table~\ref{table} for the pebbling weight function $w_{P_4}(p)$.
Suppose $r(P)=k$ for some integer $9 \leq k \leq 14$.  Then the total weight on $P$ must be at least $w_{P_4}(k)$, leaving at most $5/2-w_{P_4}(k)$ weight on the remaining edge $cf$.
\bigskip

\noindent \textbf{Case 1. $r(P)=9$.} Then $w(cf) \leq 1/18$, so the pebbling configuration with target $e$ and $14$ pebbles on $f$ is not solvable.
\bigskip

\noindent \textbf{Case 2. $r(P)=10$.}  Then $w(cf) \leq 1/6$, so the pebbling configuration with target $e$, $9$ pebbles on $a$, and $5$ pebbles on $f$ is not solvable.
\bigskip

\noindent \textbf{Case 3. $r(P)=11$.}  Then $w(cf) \leq 8/33$, so the pebbling configuration with target $e$, $10$ pebbles on $a$, and $4$ pebbles on $f$ is not solvable.
\bigskip

\noindent \textbf{Case 4. $r(P)=12$.} Then $w(cf) \leq 1/3$.  If $w(cf) < 1/3$, then the configuration with target $e$, 11 pebbles on $a$, and 3 pebbles on $f$ is not solvable.  So, assume $w(cf) = 1/3$, and hence $w(P) = 13/6$.  Consider the two undirected paths $Q=abc$ and $Q'=cde$.  Assume without loss of generality that $w(Q) \leq w(Q')$.  Thus, $w(Q) \leq 13/12$ and since a weight of 7/6 is required for a directed path of length 2 to be 3-solvable (by computer search), 3 pebbles on $c$ cannot be pebbled to $a$.  

\textbf{Subcase 4a.  $13/12 \leq w(Q') < 7/6$.}
The pebbling configuration with 11 pebbles on $f$, 3 pebbles on $e$, and target vertex $a$ is not solvable, since only 3 pebbles will arrive at $c$ and, thus, no pebbles will get to $a$.  

\textbf{Subcase 4b.  $7/6 \leq w(Q') \leq 4/3$.}
There is not enough weight on $Q'$ to be 3-solvable in both directions (see Table \ref{table}).  If the directed path from $e$ to $c$ is not 3-solvable, then the configuration with 11 pebbles on $f$ and 3 on $e$ is again not solvable to $a$.  If, on the other hand, the directed path from $c$ to $e$ is not solvable, then the configuration with target $e$, 3 pebbles on $a$, and 11 pebbles on $f$ is not solvable.  

\textbf{Subcase 4c.  $w(Q') \geq 4/3$.}
Then $w(Q) \leq 5/6$.  Since $Q$ is not 4-solvable in either direction (by computer search), the configuration with 14 pebbles on $f$ and target $a$ is not solvable.  
\bigskip

\noindent \textbf{Case 5. $r(P)=13$.} Then $w(cf) \leq 1/3$, so the pebbling configuration with target $e$, $12$ pebbles on $a$, and $2$ pebbles on $f$ is not solvable.
\bigskip

\noindent \textbf{Case 6. $r(P)=14$.} Then $w(cf) \leq 5/14$, so the pebbling configuration with target $e$, $13$ pebbles on $a$, and $1$ pebble on $f$ is not solvable.

\end{proof}

In the next proposition we show that any weight distribution for which $T$ is 15-solvable includes an edge with weight 1/3.  In this case we say that an edge weight of $1/3$ is \textbf{required} for $T$ to achieve its weighted pebbling number.  
The proof of the proposition again uses minimum weights required for pebbling across paths, found by a computer search.  Many of these values appear in Table \ref{fourpebbles}, which gives the weight needed to move $p$ pebbles along a path of length 2 in one direction and arrive at the destination with $k$ pebbles, and Table \ref{table}, which gives the weights required for a path of length $n$ to be $p$-solvable.

\begin{table}[htdp]
\caption{Weight required to move $p$ pebbles along a one-way path of length 2 finishing with $k$ pebbles}
\begin{center}
\begin{tabular}{c|c|c|c|c}
$p \backslash k$ &3&  4 & 5 & 6\\
\hline
8 & 49/40 & 17/12 & 19/12 & 97/56 \\
9 & 52/45 & 4/3 & 94/63 & 103/63 \\
10& 11/10 & 19/15 & 99/70 & 31/20 \\
11& 23/22 & 93/77 & 104/77 & 65/44 \\
12 & 1& 97/84 & 31/24 & 17/12 \\
13 & 25/26 & 101/91 & 129/104 & 53/39 \\
14 & 13/14 & 15/14 & 67/56 & 55/42 \\
15 & 94/105 & 31/30 & 52/45 & 19/15 \\
\end{tabular}
\end{center}
\label{fourpebbles}
\end{table}

\begin{theorem}
The tree $T$ in Figure \ref{1/3-tree} requires an edge weight of 1/3 to achieve its weighted pebbling number.
\label{thm:1/3-required}
\end{theorem}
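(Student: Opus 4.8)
The plan is to fix an arbitrary weight distribution $W$ on $T$ with $|W|=5/2$ for which $T_W$ is $15$-solvable, and to prove that the pendant edge $cf$ must carry weight exactly $1/3$. Write $P=abcde$ for the length-$4$ path, set $u=w_{cf}$ and $Z=\lceil 1/u\rceil$ (the requirement of the edge $cf$), and note that $w(P)=5/2-u$. Since $T$ is a tree, $T_W$ is $15$-solvable if and only if $wp(T_W,t)\le 15$ for every target $t$, and by Theorem~\ref{BigTheorem} each such number is computed from a maximum path partition. I would only need the leaf targets $t\in\{a,e,f\}$, the internal ones being dominated. For target $e$ the partition is either $\{abcde,\ f\to c\}$ or $\{fcde,\ a\to b\to c\}$ according to whether $r(abcde)\ge r(fcde)$, giving $wp(T_W,e)=r(abcde)+Z-1$ or $r(fcde)+r(abc)-1$; target $a$ is symmetric, and target $f$ gives $wp(T_W,f)=\max(r(abcf),r(edcf))+r(\text{inner half})-1$. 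The two halves of the argument then pull $u$ in opposite directions, and the claim is that they meet only at $u=1/3$.

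For the upper bound $u\le 1/3$ I would use the path targets. If $u\ge 1/2$ then $w(P)\le 2$ forces $r(abcde)\ge 16$ (a length-$4$ path of weight $2$ needs $16$ pebbles), so the $15$ pebbles on $a$ with target $e$ are unsolvable; hence $Z\ge 3$. When $Z=3$, the target-$e$ and target-$a$ inequalities in the partition case $p_1=abcde$ (resp.\ $p_1=edcba$) read $r(abcde)\le 13$ and $r(edcba)\le 13$, so $P$ is $13$-solvable in both directions and $w(P)\ge w_{P_4}(13)=13/6$ by Table~\ref{table}. Thus $u=5/2-w(P)\le 1/3$, and since $Z=3$ already gives $u\ge 1/3$, we conclude $u=1/3$.

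For the lower bound $u\ge 1/3$ I must rule out $Z\ge 4$, i.e.\ $u<1/3$, and this is the heart of the proof. The $15$ pebbles placed entirely on $f$ with targets $e$ and $a$ force $r(cde)\le q$ and $r(cba)\le q$ with $q=\lfloor 15u\rfloor$. If $q\le 3$ then each half needs at least the $2$-edge weight $7/6$ from Table~\ref{fourpebbles}, so $w(P)\ge 7/3$ and $u\le 1/6$, which drives $q\le 2$ and a second, fatal contradiction; hence $q=4$ and $u\in[4/15,1/3)$. I would then push the target-$e$, $a$, and $f$ inequalities together: since $1/u>3$, the paths $fcde$, $fcba$, $abcf$, and $edcf$ all acquire inflated requirements, so balancing each $\max(\cdots)+(\text{leftover})\le 16$ against the delivery bounds $r(cde),r(cba)\le 4$ forces so much weight onto the two halves that $w(P)$ exceeds $5/2-u$, contradicting $|W|=5/2$.

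I expect this $u<1/3$ elimination to be the main obstacle. Unlike Proposition~\ref{thm:1/3-tree}, where one fixed direction of $P$ sufficed, here all three leaf targets must be controlled at once, and the delicate case is when $p_1=fcde$ (the path from $f$, rather than from $a$, realizes the larger requirement): then the bound $r(fcde)+r(abc)\le 16$ trades off against the delivery requirement, and the argument hinges on the precise integer values of the $2$-edge pebbling weights in Table~\ref{fourpebbles} to show that no splitting of the remaining weight keeps all of $wp(T_W,a)$, $wp(T_W,e)$, $wp(T_W,f)$ at most $15$ unless $u=1/3$. It is the bookkeeping across these boundary sub-cases, rather than any single inequality, that I anticipate being the crux; the target $f$ term, which penalizes a large $Z$ exactly as the path targets penalize a small $w(P)$, is what ultimately pins $u$ to the single value $1/3$.
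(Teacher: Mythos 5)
Your overall architecture matches the paper's: fix a $15$-solvable distribution of total weight $5/2$ (using Proposition~\ref{thm:1/3-tree}), case on $u=w(cf)$, and use the computer-searched path weights to pin $u$ to $1/3$. Your handling of $u\ge 1/2$ and of $Z=3$ (i.e., $1/3\le u<1/2$) reproduces the paper's Cases 1 and 2, and your elimination of $u<4/15$ via the configuration of all $15$ pebbles on $f$ (forcing $r(cde),r(cba)\le\lfloor 15u\rfloor$, hence $w(P)\ge 7/3$, hence $\lfloor 15u\rfloor\le 2$, hence $w(P)\ge 3>5/2$) is a clean alternative to the paper's Cases 4 and 5 --- indeed slightly sharper, since it shrinks the problematic window to $[4/15,1/3)$ rather than $[1/4,1/3)$.

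However, there is a genuine gap: the window $u\in[4/15,1/3)$, which you yourself identify as the heart of the proof, is never actually argued. Saying that the leaf-target inequalities ``force so much weight onto the two halves that $w(P)$ exceeds $5/2-u$'' states the desired conclusion rather than deriving it. What is missing is the mechanism the paper supplies in its Case 3: split into subcases according to how $w(P)$ is divided between the two arms $Q=abc$ and $Q'=cde$ (WLOG $w(Q)\le w(Q')$), and in each subcase exhibit a specific split of the $15$ pebbles between $a$ and $e$ --- namely $(12,3)$, $(13,2)$, or $(15,0)$ with target $f$ --- such that the $e$-side pebbles cannot reach $c$ at all (because $w(Q')$ lies below the one-way two-edge thresholds $7/6$ or $3/2$), while the $a$-side pebbles cannot deliver the required $4$ pebbles to $c$ (because $w(Q)$ lies below $97/84$, $101/91$, or $31/30$ respectively, from Table~\ref{fourpebbles}). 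Your partition framework can express exactly this, since the unsolvable configuration attached to a partition $\{p_1,p_2\}$ for target $f$ is precisely such a split, so the approach is viable; but until those subcases are carried out, the theorem is not proved. A secondary, fixable flaw: in your $Z=3$ step you read off $r(abcde)\le 13$ from ``the partition case $p_1=abcde$,'' but if $r(fcde)>r(abcde)$ the maximum path partition for target $e$ is $\{fcde,\,abc\}$ and Theorem~\ref{BigTheorem} gives a different inequality. The direct patch is to note that when $Z=3$ two pebbles on $f$ are immovable, so solvability of the configuration with $13$ pebbles on $a$, $2$ on $f$, and target $e$ forces $r(abcde)\le 13$ irrespective of which partition is maximal.
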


\begin{proof}
The total weight on the edges of $T$ is 5/2.
We prove that the edge $cf$ requires a weight of 1/3 by contradiction.  Suppose $w(cf)$ is not 1/3.  Without loss of generality, suppose that $ w(ab) + w(bc) \leq w(cd) + w(de)$.
\bigskip

\noindent \textbf {Case 1.} $w(cf) \geq 1/2$.  Then the combined edge weights on the path of length 4, $abcde$ is at most 2 and, by Table \ref{table}, this is not enough weight to be solvable with 15 pebbles.  So either the pebbling configuration with target $e$ and 15 pebbles on $a$ or with target $a$ and 15 pebbles on $e$ is not solvable. 

\noindent {\bf Case 2.} $1/3 < w(cf) < 1/2 $.  In this case, the combined weight on path $abcde$ is at most 13/6, and so the pebbling configuration with target $e$, 13 pebbles on $a$ and 2 pebbles on $f$ (or target $a$, 13 pebbles on $e$ and 2 pebbles on $f$) is not solvable, again by Table \ref{table}.

\noindent {\bf Case 3.} $1/4 \leq w(cf) < 1/3 $.  To solve to target $f$, we need to get at least 4 pebbles to vertex $c$.  We show that regardless of the remaining weight distribution, there exists an unsolvable configuration of pebbles.  Note that $w(ab) + w(bc) \leq 9/8$ and $w(cd) + w(de) \geq 9/8$.

{\bf Subcase 3a.}   $9/8 \leq w(cd) + w(de) < 7/6$.  The pebbling configuration with target $f$, 12 pebbles on vertex $a$ and 3 pebbles on vertex $e$ is not solvable, because the weight on path $edc$ is less than the 7/6 required for this one-way path to be 3-solvable (by computer search), and hence no pebbles can get from $e$ to $c$,
 and the weight on path $abc$ is less than the 97/84 required to move 4 pebbles from vertex $a$ to vertex $c$ (see Table \ref{fourpebbles}).

{\bf Subcase 3b.}  $7/6 \leq w(cd) + w(de) < 3/2$.  (Thus, $w(ab) + w(bc) \leq 13/12$).  The pebbling configuration with 2 pebbles on $e$ and 13 pebbles on $a$ is not solvable to $f$, since it requires a weight of at least 3/2 to move one pebble from vertex $e$ to vertex $c$ (by computer search) and a weight of at least 101/91 to move 4 pebbles from vertex $a$ to vertex $c$ (see Table \ref{fourpebbles}).

{\bf Subcase 3c.}  $3/2 \leq w(cd) + w(de) \leq 9/4 $.  The pebbling configuration with target $f$ and 15 pebbles on vertex $a$ is not solvable, since $w(ab) + w(bc) \leq 3/4 $, but a weight of at least $31/30$ is required to move 4 pebbles to vertex $c$ (see Table \ref{fourpebbles}).

\noindent {\bf Case 4.}  $1/5 \leq w(cf) < 1/4 $.  The pebbling configuration with target $f$ and 15 pebbles on $a$ is not solvable since the path $abc$ has a combined weight of at most $23/20$, but, by Table \ref{fourpebbles}, it requires a weight of at least $52/45$ to move 5 pebbles from vertex $a$ to vertex $c$ and hence to move 1 pebble to $f$.

\noindent{\bf Case 5.} $w(cf) < 1/5$.  Place 15 pebbles on vertex $a$, and let $f$ be the target vertex.  The combined weight of path $abc$ is at most 5/4.  However, by Table \ref{fourpebbles}, it requires a weight of at least 19/15 to move 6 pebbles to vertex $c$.  Therefore this pebbling configuration is not solvable to $f$.
\end{proof}

We remark that for some graphs $G$, $|E(G)|/2$ is more than enough weight to solve any distribution with $wp(G)$ pebbles.  For example, $wp(K_5)=1$ and $|E(K_5)|/2=5$, but there exists a weight distribution $W$ on $K_5$ of total weight 4 with weighted pebbling number 1.  So the extra weight of 1 is unnecessary.  
By contrast, the proof of Proposition~\ref{thm:1/3-required} implies that the full weight $|E(T)|/2$ is needed for $T$ to achieve its weighted pebbling number.  This is also the case for paths and stars by Corollary \ref{PathPebblingNumber} and Proposition \ref{starpf}.  We conjecture the following. 
\begin{conjecture} \label{full weight}
All trees require the full weight $|E(G)|/2$ to achieve their weighted pebbling number. 
That is, for a tree $T$, $w_T(wp(T))=|E(G)|/2$.
\end{conjecture}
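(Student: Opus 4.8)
The plan is to prove the nontrivial inequality hiding inside the stated identity. Because $T$ is, by the very definition of $wp(T)$, $(|E(T)|/2, wp(T))$-solvable, the bound $w_T(wp(T)) \le |E(T)|/2$ is free; the content of the conjecture is the reverse inequality, namely that no weight distribution of total weight strictly below $|E(T)|/2$ can make $T$ be $wp(T)$-solvable. It is convenient to recast this: writing $M(w) = \min_{|W| = w} wp(T_W)$ for the least weighted pebbling number attainable with total weight $w$ (a quantity that is non-increasing in $w$, since enlarging any edge weight can only help, exactly as in the proof of Lemma~\ref{rational}), one checks that $w_T(wp(T)) = \min\{\, w : M(w) \le wp(T)\,\}$. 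Since $M(|E(T)|/2) = wp(T)$, the conjecture is precisely the assertion that $M$ reaches this value for the first time at $w = |E(T)|/2$, i.e.\ $M(w) > wp(T)$ for every $w < |E(T)|/2$. So the whole problem is a strict monotonicity statement for $M$ at a single point.

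The basic engine is the estimate, already implicit in Corollary~\ref{PathPebblingNumber}, that the requirement of a directed path dominates the product of its reciprocal edge weights, $r(p) \ge \prod_{e \in p} 1/w_e$, because each nested ceiling only increases the count. Combined with the AM--GM inequality, a directed path of length $\ell$ carrying total edge weight $s$ satisfies $r(p) \ge (\ell/s)^{\ell}$, with equality and exact ceilings precisely when all $\ell$ weights equal $s/\ell$. For a single path this already closes the argument: if $s < \ell/2$ then $r(p) \ge (\ell/s)^{\ell} > 2^{\ell} = wp(P_\ell)$, which is the conjecture for paths, and the star case is supplied by Proposition~\ref{starpf}. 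These furnish the base cases for an induction.

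For a general tree I would induct on the number of edges, peeling off a path as in Chung's decomposition. Fix a distribution $W$ with $|W| < |E(T)|/2$ and a worst-case target $t$; by Theorem~\ref{BigTheorem}, $wp(T_W) = \sum_{i=1}^m r(p_i) - m + 1$ for the maximum path partition $\{p_1,\dots,p_m\}$ of $T$ with respect to $W$ and $t$. Writing $s_i$ for the weight on $p_i$ and $\ell_i$ for its length, so that $\sum_i s_i < \tfrac12 \sum_i \ell_i$, the per-path bound yields $wp(T_W) \ge \sum_i (\ell_i/s_i)^{\ell_i} - m + 1$. The goal is to show this exceeds $wp(T)$ whenever the deficit $\tfrac12\sum_i \ell_i - \sum_i s_i$ is positive, by routing the deficit onto whichever path in the partition is starved and invoking the inductive hypothesis on the subtree obtained after deleting that path's pendant portion.

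The main obstacle is that the maximum path partition --- and hence the lengths $\ell_i$ themselves --- depends on $W$, so the AM--GM estimate for a suboptimal $W$ must be compared against $wp(T)$, which is realized by a different partition belonging to the optimal distribution $W^*$ (recall that for the tree of Figure~\ref{1/3-tree} the optimum forces the non-uniform weight $1/3$, whence $wp(T)=15<17=p(T)$). Worse, the continuous AM--GM bound alone is too lossy: for that tree it already falls short of the true value $15$ at full weight, and closing even that unit gap --- let alone reaching the value $16$ forced by a positive deficit --- is accounted for entirely by the integer rounding in the nested ceilings. I therefore expect the crux to be an honest discrete analysis of those ceilings, in the spirit of the delicate case-by-case argument used in the proof of Theorem~\ref{thm:1/3-required}, packaged into a strengthened inductive hypothesis that tracks, for each path of the partition, not merely the product bound but the exact requirement as a function of the weight allotted to it.
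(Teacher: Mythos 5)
You should note at the outset that the statement you are addressing is Conjecture~\ref{full weight}: the paper offers no proof of it, so there is no ``paper's own proof'' to measure your attempt against, and your text must stand or fall as a standalone argument. As such it falls short: what you have written is a plan, not a proof. Your reduction of the conjecture to strict monotonicity of $M(w)=\min_{|W|=w} wp(T_W)$ at $w=|E(T)|/2$ is correct, the bound $r(p)\geq \prod_{e\in p}1/w_e\geq (\ell/s)^{\ell}$ is correct, and the path base case does follow from it. But the inductive step for a general tree is never executed: ``routing the deficit onto whichever path is starved and invoking the inductive hypothesis on the subtree'' is a statement of intent, with no precise induction hypothesis, no handling of the fact (which you yourself flag) that the maximum path partition in Theorem~\ref{BigTheorem} changes with $W$, and no construction of the ``strengthened inductive hypothesis'' you say is needed.

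Moreover, the quantitative engine you propose is provably too weak to close the argument, as you partially concede. Take the tree $T$ of Figure~\ref{1/3-tree} (five edges, $wp(T)=15$ by Proposition~\ref{thm:1/3-tree}). Any lower bound derived from your estimate has the form $wp(T_W)\geq (4/s_1)^4+(1/s_2)-1$, where $s_1$ and $s_2$ are the weights on the length-4 path and the pendant edge; to bound all distributions of total weight $5/2$ one must minimize over $s_1+s_2=5/2$, and this minimum (attained near $s_1\approx 9/4$, $s_2\approx 1/4$) is approximately $12.98$, certifying only $wp(T_W)\geq 13<15$. So the AM--GM estimate cannot even recover the known value of $wp(T)$ at full weight, much less show that every distribution of weight strictly below $5/2$ forces $wp(T_W)\geq 16$. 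The entire content of the conjecture lives in the integer jumps of the nested ceilings defining the requirements --- exactly the part you defer to ``an honest discrete analysis of those ceilings,'' which is not supplied. That discrete analysis is precisely the open problem; your proposal correctly identifies the obstacle but does not overcome it, and the conjecture remains unproven.
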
 

\section{Longer Paths and Further Directions}

In this section, we observe an upper bound for the pebbling weight function of the 3-edge path $P_3$, and we give a lower bound for the pebbling weight function of a path of length $n$.

\begin{remark}
We have 
$$ w_{P_3}(p) \leq 
\min_k \left[\frac{\lceil \frac{p}k \rceil}{p} + \frac k{\lceil \frac{p}k \rceil} +  \frac{\lceil \frac{p}k \rceil}{p}\right].
$$
\end{remark}

Let $P_3=abcd$, and assign weights
$\frac{\lceil \frac{p}k \rceil}{p}$, $\frac k{\lceil \frac{p}k \rceil}$ and $\frac{\lceil \frac{p}k \rceil}{p}$ to the three edges $ab,bc,$ and $cd$ respectively.  Notice that if we place $p$ pebbles on $a$ with target $d$, we are able to move $\lceil \frac{p}k \rceil$ pebbles to $b$, $k$ pebbles to $c$, and (since $(k \lceil \frac{p}k \rceil)/p \geq 1$), at least one pebble to $d$.  Thus, the minimum of
$\min_k \left (\frac{\lceil \frac{p}k \rceil}{p} + \frac k{\lceil \frac{p}k \rceil} +  \frac{\lceil \frac{p}k \rceil}{p} \right)$
 over $k$ is an upper bound for the pebbling weight function of $P_3$.  
We further note that when $$k = \left\lceil \frac16\left(108p+3\sqrt{-3+1296p^2}\right)^{1/3} + \frac1{2\left(108p+3\sqrt{-3+1296p^2}\right)^{1/3}} -\frac12 \right\rceil,$$
obtained by solving the equation
$p=k^3+(3/2)k^2+(1/2)k$ for $k$,
the above weight distribution gives an exact bound on $wp(P_3)$ approximately 60\% of the time in the range $1\leq p \leq 100$.  In the cases where the bound is not tight, for $p \leq 100$, the actual weight distribution is within 0.05 of the bound.

\begin{proposition} \label{pathlb}
$w_{P_n}(p) \geq n/\sqrt[n]{p}$.
\end{proposition}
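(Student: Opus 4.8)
The plan is to exhibit a single worst-case configuration whose solvability forces a lower bound on the \emph{product} of the edge weights, and then to convert that multiplicative constraint into the desired additive one via the AM--GM inequality.

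Write $P_n=v_0v_1\cdots v_n$, and let $W$ be any weight distribution, with weight $w_i$ on the edge $v_{i-1}v_i$, for which $P_n$ is $p$-solvable. First I would consider the configuration $C$ placing all $p$ pebbles on $v_0$ with target $v_n$; since $P_n$ is $p$-solvable, $C$ must be solvable. To see what solvability costs, I would introduce the potential $\Phi(C')=\sum_{j=0}^n C'(v_j)\prod_{i=j+1}^n w_i$, assigning a pebble at $v_j$ the value $\prod_{i=j+1}^n w_i$ (with the empty product equal to $1$). The key claim is that $\Phi$ is non-increasing under pebbling steps: a forward step across $v_{j-1}v_j$ removing $k$ pebbles from $v_{j-1}$ and depositing $\lfloor k w_j\rfloor$ on $v_j$ changes $\Phi$ by
$$
\lfloor k w_j\rfloor\prod_{i=j+1}^n w_i - k\prod_{i=j}^n w_i \;\leq\; k w_j\prod_{i=j+1}^n w_i - k\prod_{i=j}^n w_i \;=\;0,
$$
using $\lfloor k w_j\rfloor\leq k w_j$; the analogous computation, using in addition $w_j^2\leq 1$, shows that steps in the reverse direction also cannot increase $\Phi$.

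Since $\Phi$ starts at $p\prod_{i=1}^n w_i$ (all pebbles on $v_0$) and every solved configuration has at least one pebble on $v_n$, contributing at least $1$ to $\Phi$, monotonicity of $\Phi$ gives $p\prod_{i=1}^n w_i\geq 1$, that is, $\prod_{i=1}^n w_i\geq 1/p$. Applying AM--GM to the nonnegative weights then yields
$$
|W|=\sum_{i=1}^n w_i \;\geq\; n\left(\prod_{i=1}^n w_i\right)^{1/n} \;\geq\; n\left(\tfrac1p\right)^{1/n} \;=\; \frac{n}{\sqrt[n]{p}}.
$$
As this bound holds for \emph{every} $p$-solvable weight distribution, it holds for a minimizing one in particular, and hence $w_{P_n}(p)\geq n/\sqrt[n]{p}$.

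The AM--GM step and the evaluation of the initial and final values of $\Phi$ are routine. The one step deserving genuine care, and the main conceptual point, is verifying that $\Phi$ truly cannot increase under any pebbling step, including steps that shuttle pebbles backward along the path; this is exactly what rules out any clever scheme for beating the naive ``multiply by all the edge weights'' bound and thereby pins down the constraint $\prod_i w_i\geq 1/p$.
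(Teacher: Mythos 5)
Your proof is correct, and its skeleton matches the paper's: both reduce to the single configuration with all $p$ pebbles on one leaf and the target at the other leaf, both extract the multiplicative constraint $p\,w_1w_2\cdots w_n\geq 1$, and both finish with the same constrained optimization (your AM--GM step is exactly the minimization the paper invokes). The genuine difference is how the product constraint is justified. The paper simply asserts that ``the only solution involves moving all $p$ pebbles from $v_1$ toward $v_{n+1}$'' and then iterates the floor inequality along the path; this is intuitively clear but left unproved --- a priori a solver could move pebbles in separate batches or shuttle some backward and forward, and the paper does not rule this out. Your potential function $\Phi(C')=\sum_{j} C'(v_j)\prod_{i=j+1}^{n}w_i$ closes precisely this gap: checking that $\Phi$ cannot increase under forward steps (via $\lfloor kw_j\rfloor\leq kw_j$) and under backward steps (using additionally $w_j\leq 1$) covers every legal pebbling sequence at once, and comparing the initial value $p\prod_{i=1}^n w_i$ with the final value, which is at least $1$, yields the constraint rigorously. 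So your route costs a bit more writing but buys an airtight reduction; the monovariant technique is also more robust, since it would transfer to graphs or configurations where the ``obviously just push everything forward'' heuristic is harder to defend, whereas the paper's argument leans on the special geometry of a path with all pebbles at one end.
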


\begin{proof}
Suppose $W$ is a weight distribution for which $P_n$ is $p-$solvable.  Let $v_1, \ldots, v_{n+1}$ be the vertices of $P_n$, and let $w_{i}$ be the weight on edge $v_iv_{i+1}$.
Since the weighted graph $(P_n)_W$ must be solvable for any distribution of pebbles to any target, consider the distribution with $p$ pebbles on $v_1$ and target $v_{n+1}$.  The only solution involves moving all $p$ pebbles from $v_1$ toward $v_{n+1}$.  Now $\lfloor pw_1\rfloor$ pebbles arrive at $v_2$, $\lfloor \lfloor pw_1\rfloor \cdot w_2 \rfloor$ pebbles arrive at $v_3$, etc., and since at least one pebble arrives at $v_{n+1}$, $\lfloor \ldots \lfloor pw_1\rfloor \cdot w_2 \rfloor  \ldots \cdot w_{n}\rfloor \geq 1$.  Thus $ p w_1 w_2 \cdots w_{n} \geq 1$.  The minimum of $\sum_{i=1}^{n}w_i$ subject to the constraint $\Pi_{i=1}^n w_i \geq 1/p$ occurs when $w_1 = w_2 = \cdots = w_n = 1/\sqrt[n]{p}$.  Thus $|W| \geq n/\sqrt[n]{p}$ and the result follows.
\end{proof}

This lower bound is very close to the known values of $w_{P_n}(p)$, and exact when $\sqrt[n]p$ is an integer. Figure \ref{maple} shows this bound plotted against the exact values from Table~\ref{table}.

\begin{figure} [!htbp]
\begin{center}\includegraphics[width=7cm]{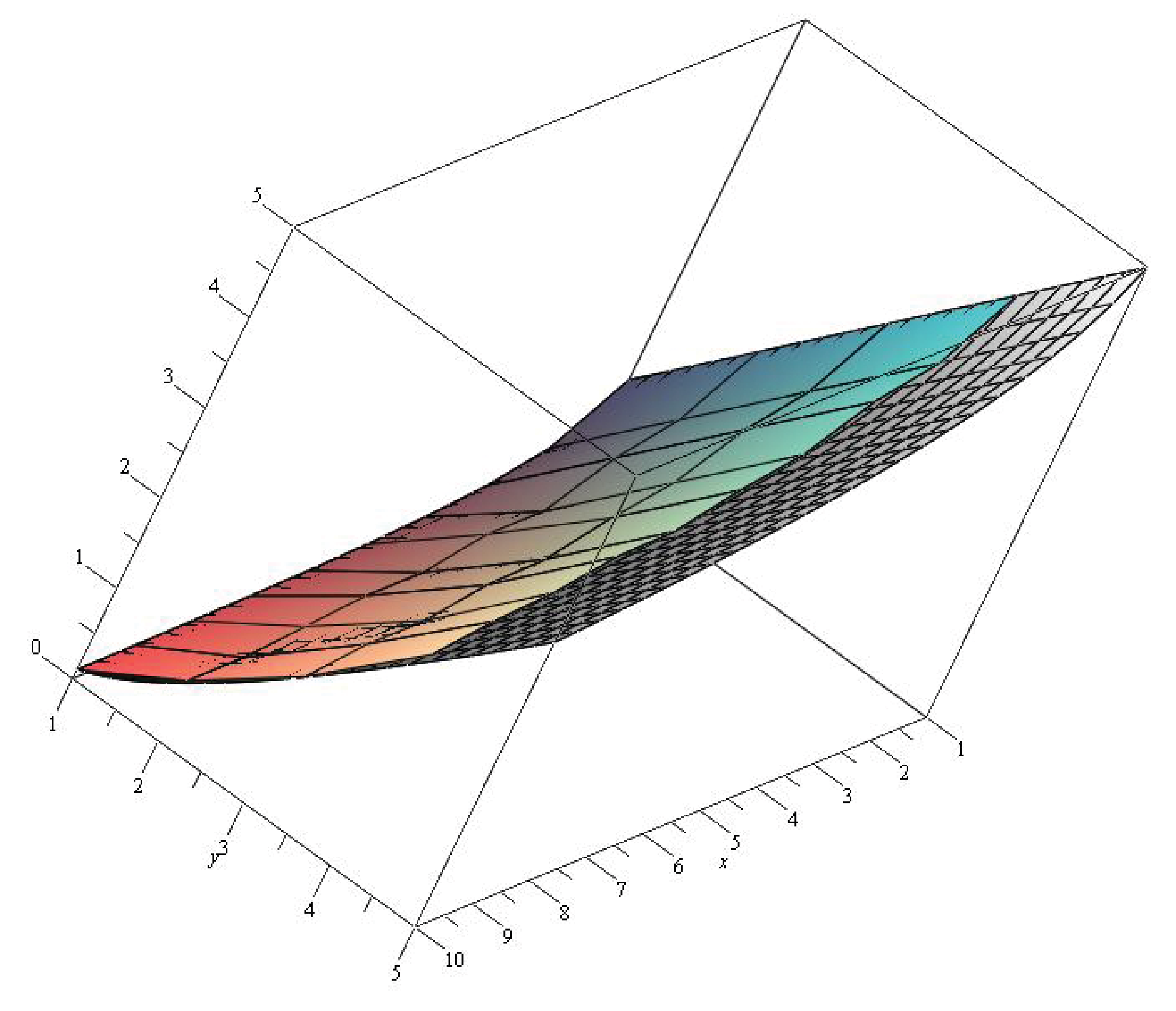}
 \end{center}\caption{Comparison of calculated minimum weight $w_{P_n}(p)$ to the lower bound of Proposition \ref{pathlb} for $1 \leq p \leq 10$ and $1 \leq n \leq 5$.} \label{maple}
\end{figure}

Many questions remain open.  For example, it would be interesting to find a formula for the weighted pebbling number of trees that does not depend on the target vertex or weight distribution.  Other questions include:
\begin{enumerate}
\item What is a formula for the pebbling weight function for general paths or graphs?
\item For which rational numbers
$a$, $0\leq a \leq 1$, are there graphs which require an edge weight of $a$ to achieve their weighted pebbling number?
\item What is the weighted pebbling number of the cycle $C_n$?
\item Can we classify graphs by their weighted pebbling number?  In particular, in light of Proposition \ref{2n-2}, what graphs $G$ have $wp(G)=2$, etc.?  
\end{enumerate}

We conclude with a table of computer-verified weight requirements for paths of varying lengths and for different numbers of pebbles.  
The discrepancies we see in the denominators of values in the table are curious.  For example, we need a minimum weight of 2 to move 18 pebbles across a path of length 4, but to move 19 across we need a weight of 371/190.
These numbers are approximating $\sqrt[n]p$ where $p$ is the number of pebbles and $n$ is the length of the path, with rational numbers, but it is unclear why some approximations have large error and others very small.  This is also an interesting direction for future research.

\begin{table}[h!]
\caption{Values of $w_{G}(p)$ if $G$ is a path with $n$ edges.} \label{table}
\begin{tabular}{c|c|c|c|c|c|c}
  $p \backslash n$  & 2 & 3 & 4 & 5 & 6 & 7 \\
\hline
1 & 2 & 3 & 4 & 5 & 6 & 7 \\
2 & 3/2 & 5/2 & 7/2 & 9/2 & 11/2 & 13/2 \\
3 & 4/3 & 7/3 & 10/3 & 13/3 & 16/3 & 19/3 \\
4 & 1 & 2 & 3 & 4 & 5 & 6 \\
5 & 1 & 28/15 & 43/15 & 58/15 & 73/15 & 88/15 \\
6 & 5/6 & 5/3 & 8/3 & 11/3 & 14/3 & 17/3 \\
7 & 5/6 & 23/14 & 37/14 & 51/14 & 65/14 & 79/14 \\
8 & 3/4 & 3/2 & 5/2 & 7/2 & 9/2 & 11/2 \\
9 & 2/3 & 3/2 & 22/9 & 41/12 & 53/12 & 65/12 \\
10 & 2/3 & 7/5 & 7/3 & 197/60 & 257/60 & 317/60 \\
11 & 7/11 & 7/5 & 149/66 & 419/132 & 551/132 & 683/132 \\
12 & 7/12 & 4/3 & 13/6 & 37/12 & 49/12 & 61/12 \\
13 & 7/12 & 4/3 & 13/6 & 37/12 & 49/12 & \\
14 & 4/7 & 9/7 & 15/7 & 106/35 & 141/35 & \\
15 & 8/15 & 53/42 & 31/15 & 3 & 4 & \\
16 & 1/2 & 37/30 & 2 & 44/15 & 59/15 & \\
17 & 1/2 & 41/34 & 2 & 295/102 & 397/102 & \\
18 & 1/2 & 7/6 & 2 & 17/6 & 23/6 & \\
19 & 9/19 & 155/133 & 371/190 & 17/6 & 289/76 & \\
20 & 9/20 & 79/70 & 19/10 & 14/5 & 15/4 & \\
21 & 9/20 & 23/21 & 19/10 & 293/105 & 1147/308 & \\
22 & 9/20 & 23/21 & 19/10 & 91/33 & 485/132 & \\
23 & 10/23 & 197/184 & 43/23 & 187/69 & 1001/276 & \\
24 & 5/12 & 25/24 & 11/6 & 8/3 & & \\
25 & 2/5 & 25/24 & 11/6 & 8/3 & & \\
26 & 2/5 & 40/39 & 9/5 & 241/91 & & \\
27 & 2/5 & 1 & 97/54 & 493/189 & & \\
28 & 11/28 & 1 & 16/9 & 18/7 & & \\
29 & 11/29 & 287/290 & 511/290 & 18/7 & & \\
30 & 11/30 & 29/30 & 26/15 & 77/30 & & \\
31 & 11/30 & 29/30 & 26/15 & 15//62 & & \\
32 & 11/30 & 169/176 & 55/32 & 5/2 & & \\
33 & 4/11 & 31/33 & 41/24 & 5/2 & & \\
34 & 6/17 & 31/33 & 403/238 & 5/2 & & \\
35 & 12/35 & 14/15 & 2003/1190 & & &  \\
36 & 1/3 & 11/12 & 349/210 & & &  \\
37 & 1/3 & 11/12 & & & & \\
38 & 1/3 & 451/494 & & & & \\
39 & 1/3 & 35/39 & & & & \\
40 & 13/40 & 139/156 & & & & \\

\end{tabular}
\end{table}

\bibliographystyle{plain}
\bibliography{bibliography}

\begin{thebibliography}{10}

\bibitem{bekmetjev09}
Airat Bekmetjev and Charles~A. Cusack.
\newblock Pebbling algorithms in diameter two graphs.
\newblock {\em SIAM J. Discrete Math.}, 23(2):634--646, 2009.

\bibitem{bukh06}
Boris Bukh.
\newblock Maximum pebbling number of graphs of diameter three.
\newblock {\em J. Graph Theory}, 52(4):353--357, 2006.

\bibitem{chung89}
Fan R.~K. Chung.
\newblock Pebbling in hypercubes.
\newblock {\em SIAM J. Discrete Math.}, 2(4):467--472, 1989.

\bibitem{clarke97}
T.~A. Clarke, R.~A. Hochberg, and G.~H. Hurlbert.
\newblock Pebbling in diameter two graphs and products of paths.
\newblock {\em J. Graph Theory}, 25(2):119--128, 1997.

\bibitem{curtis09}
Dawn Curtis, Taylor Hines, Glenn Hurlbert, and Tatiana Moyer.
\newblock On pebbling graphs by their blocks.
\newblock {\em Integers}, 9:G02, 411--422, 2009.

\bibitem{feng01}
Rongquan Feng and Ju~Young Kim.
\newblock Graham's pebbling conjecture on product of complete bipartite graphs.
\newblock {\em Sci. China Ser. A}, 44(7):817--822, 2001.

\bibitem{herscovici03}
David~S. Herscovici.
\newblock Graham's pebbling conjecture on products of cycles.
\newblock {\em J. Graph Theory}, 42(2):141--154, 2003.

\bibitem{herscovici98}
David~S. Herscovici and Aparna~W. Higgins.
\newblock The pebbling number of {$C_5\times C_5$}.
\newblock {\em Discrete Math.}, 187(1-3):123--135, 1998.

\bibitem{hurlbert05}
Glenn Hurlbert.
\newblock Recent progress in graph pebbling.
\newblock {\em Graph Theory Notes N.~Y.}, 49:25--37, 2005.

\bibitem{hurlbert99}
Glenn~H. Hurlbert.
\newblock A survey of graph pebbling.
\newblock In {\em Proceedings of the {T}hirtieth {S}outheastern {I}nternational
  {C}onference on {C}ombinatorics, {G}raph {T}heory, and {C}omputing ({B}oca
  {R}aton, {FL}, 1999)}, volume 139, pages 41--64, 1999.

\bibitem{wang09}
Zhiping Wang, Yutang Zou, Haiying Liu, and Zhongtuo Wang.
\newblock Graham's pebbling conjecture on product of thorn graphs of complete
  graphs.
\newblock {\em Discrete Math.}, 309(10):3431--3435, 2009.

\end{thebibliography}

\end{document}